\documentclass[11pt,reqno]{amsart}
\usepackage{amscd,amssymb,amsmath,amsthm}
\usepackage[english]{babel}
\usepackage[T1]{fontenc}
\usepackage[arrow,matrix]{xy}
\usepackage{graphicx}
\usepackage{tikz}
\usetikzlibrary{positioning,matrix,arrows,calc}
\usepackage{cite}
\usepackage{dsfont}
\usepackage{bbm}%fuer die dicke Indikatorfunktionseins -> \mathbbm
\topmargin=0.1in \textwidth5.8in \textheight7.8in
\newtheorem{theorem}{Theorem}[section]
\newtheorem{lemma}[theorem]{Lemma}

\theoremstyle{definition}
\newtheorem{remark}[theorem]{Remark}

\numberwithin{equation}{section} \setcounter{tocdepth}{1}

\newcommand{\wC}{\widehat{\mathbb{C}}}

%%%%%%%%%%%%%%%%%%%%%%%%%%%%%%%%%%%%%%%%%%%%%%%%%%%%%%%%%

%%%%%%%%%%%%%%%%%%%%%%%%%%%%%%%%%%%%%%%%%%%%%%

%%%%%%%%%%%%%%%%%%%%%%%%%%%%%%

\begin{document}

\title[]{Critical intermittency in rational maps}

\author[{\tiny{Ale Jan Homburg}}]{Ale Jan Homburg}
\address{KdV Institute for Mathematics, University of Amsterdam, Science park 107, 1098 XH Amsterdam, Netherlands}
\address{Department of Mathematics, VU University Amsterdam, De Boelelaan 1081, 1081 HV Amsterdam, Netherlands}
\email{a.j.homburg@uva.nl}
\author[{\tiny{Han Peters}}]{Han Peters}
\address{KdV Institute for Mathematics, University of Amsterdam, Science park 107, 1098 XH Amsterdam, Netherlands}
\email{h.peters@uva.nl}
\author[{\tiny{Vahatra Rabodonandrianandraina}}]{Vahatra Rabodonandrianandraina}
\address{KdV Institute for Mathematics, University of Amsterdam, Science park 107, 1098 XH Amsterdam, Netherlands}
\email{v.f.rabodonandrianandraina@uva.nl}

\begin{abstract}
	Intermittent dynamics is characterized by
	long periods of different types of dynamical characteristics, for instance almost periodic dynamics alternated by chaotic dynamics.
	Critical intermittency is intermittent dynamics that can occur in iterated function systems, and involves a superattracting periodic orbit.
	
	This paper will provide and study examples of iterated function systems
	by two rational maps on the Riemann sphere that give rise to critical intermittency.
	The main ingredient for this is a superattracting fixed point for one map that is mapped onto a common repelling fixed point by the other map. We include a study
	of topological properties such as topological transitivity.
\end{abstract}

\maketitle

\section{Introduction}

This paper will provide and study examples of iterated function systems
by two rational maps on the Riemann sphere that give rise to intermittent
time series. The central example of the paper is intermittency of a type that we call critical intermittency, where the main ingredient is a superattracting fixed point for one map that is mapped by the other map onto a common repelling fixed point. We consider a topological
description of the dynamics for which we study density of orbits of the semi group generated by the iterated function system. And we consider
a metrical description by looking at statistical properties of the intermittent time series.

In dynamical systems theory, intermittency stands for time series that
alternate between different characteristics.
It in particular indicates times series that appear stationary
over long periods of time and are interrupted by bursts of nonstationary dynamics. These are called the laminar phase and relaminarization.
Explanations for the occurrence of intermittent time series were given by
Pomeau and Manneville \cite{pm80}, see also \cite{bpv86}.
They offered explanations using bifurcation theory,
and distinguished different types of intermittency
caused by different local bifurcations.
%See also \cite{bpv86}.
%As this paper discusses examples of dynamics
%with intermittent time series, we give some pointers to the literature
%on different examples of intermittency.
Later research added to the list of mechanisms
giving intermittency, including
crisis induced intermittency, homoclinic intermittency,
on-off intermittency and in-out intermittency.

%
%crisis induced intermittency \cite{gory87,hy02},
%homoclinic intermittency \cite{lpz89,h96},  intermittent switching
%associated to heteroclinic cycles \cite{k97}, on-off intermittency and in-out intermittency
%\cite{pst93,hph94,hr19,act99}.

%A careful analysis of statistical properties of intermittent time series
%has been carried through in particular for models of non-uniformly expanding interval maps with neutral fixed point, the Pomeau-Manneville maps ($x\mapsto x + x^{1+\alpha} \mod 1$ on $[0,1]$) or its simplified versions, the Liverani-Saussol-Vaienti maps
%\cite{lsv99}.
% More recently random versions, in contexts of iterated functions systems,
%have been studied. %\cite{bbd14,bb16}.
%The statistical properties (invariant probability measures, central limit theorems, decay of correlations, large deviations)
%depend on the order of contact  of the map with the diagonal ($1 + \alpha$ for the Pomeau-Manneville maps).
%Random versions, in contexts of iterated functions systems, are treated in \cite{bbd14,bb16}.

\subsection{Critical intermittency in iterated function systems of logistic maps}

The type of dynamics we consider in this paper is related to the following example from interval dynamics.

Denote $g_a (x) = a x (1-x)$ for the logistic map on the interval $[0,1]$,
with $0<a \le 4$.
Consider the iterated function system generated by the two maps
$f_0 = g_2$ and $f_1 = g_4$.
This defines a semi-group $\langle f_0,f_1 \rangle$ 
of compositions
of $f_0$ and $f_1$.
For each iterate we pick $i\in\{0,1\}$ at random, i.i.d., with probabilities $p_0$ and $p_1 = 1-p_0$, and then iterate using $f_i$.
Note that this defines a Markov process and recall that a stationary measure for the Markov process is a measure $m$ satisfying
\[
m = p_0 f_0 m + p_1 f_1 m.
\]
Here $f_i m$, $i=0,1$, stands for the push forward $f_i m (A) = m (f_i^{-1} (A))$.
Carlsson \cite{c02} observed that the only stationary measure for this iterated function system is the delta measure at $0$ if $p_0 > 1/2$.
This was further investigated in \cite{agh18}
where $\sigma$-finite stationary measures were constructed for $p_0 > 1/2$,
and in \cite{hometal21} that studied stationary measures for all values of $p_0$.

We summarize results on this iterated function system
in the theorem below.
Write $\Sigma = \{0,1\}^\mathbb{N}$ and endow $\Sigma$ with the product topology and the Borel $\sigma$-algebra.
Further write $\omega = (\omega_i)_{i\in\mathbb{N}}$ for elements of $\Sigma$.
We denote
\[
f^n_\omega (x) = f_{\omega_{n-1}} \circ \cdots \circ f_{\omega_0} (x).
\]
On $\Sigma$ we consider the Bernoulli measure $\nu_{p_0,p_1}$ given the probabilities $p_0,p_1$.

\begin{theorem}[see \cite{agh18}]
Consider the iterated function system 	$\langle f_0,f_1 \rangle$ on
$[0,1]$ given with probabilities $p_0,p_1$.

%The iterated function system is minimal on $(0,1)$.
% For all $x \in (0,1)$ the $G$-orbit of $x$ is dense in $[0,1]$.

Assume $p_1 \ge 1/2$.
Then the delta measure $\delta_0$ at $0$ is the unique stationary probability measure.
%The only stationary probability measure is the delta measure $\delta_0$ at $0$.
There is an absolutely continuous  $\sigma$-finite stationary measure 
with support equal to $[0,1]$.
% assigning zero measure to the points $\{0,1\}$.
%For any $x\in [0,1]$,
%\[
%\lim_{n\to\infty} \frac{1}{n}\sum_{i=0}^{n-1} \delta_{f^i_{\omega} (z)} = \delta_0,
%\]
%with convergence in the weak star topology, for $\nu_{p_0,p_1}$ almost all $\omega$.
For any $\varepsilon >0$ and for Lebesgue almost any $x \in [0,1]$,
\begin{enumerate}
	\item
	$f^n_\omega (x) \not \in (0,\varepsilon)$ for infinitely many $n$;
	\item
	$
	\lim_{N\to \infty} \frac{1}{N} \left\vert \{ 0 \le n < N \; : \; f^n_\omega (x) \in (0,\varepsilon) \} \right\vert = 1,
	$
\end{enumerate}
for almost all $\omega \in \Sigma$.
\end{theorem}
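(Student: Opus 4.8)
\emph{Proof strategy.}
The plan is to base all four assertions on the intermittent orbit structure forced by the superattracting--repelling configuration. That $\delta_0$ is stationary is immediate from $f_0(0)=f_1(0)=0$; everything else concerns orbits in $(0,1)$ and rests on the following estimate. Iterating $f_0=g_2$ supercontracts towards its superattracting fixed point $1/2$, so a maximal block of $k$ consecutive $f_0$'s lands the orbit doubly-exponentially close to $1/2$, at distance $\asymp c^{2^{k}}$ for some $c<1$; two further steps---at least one being $f_1=g_4$, since $f_1(1/2)=1$ and $f_1(1)=0$---then carry it to distance $\asymp c^{2^{k}}$ of the common repelling fixed point $0$; and as both maps expand near $0$ by a factor $2$ or $4$, it takes $\asymp\log(1/c^{2^{k}})\asymp 2^{k}$ further steps to leave any fixed neighbourhood of $0$. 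Thus an $f_0$-block of geometrically distributed length $k$ triggers a laminar sojourn near $0$ of length exponential in $k$, hence with only a polynomial tail, and the hypothesis of the Theorem is exactly what makes this tail non-summable, i.e.\ it forces the laminar sojourns to have infinite mean.

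Density of $G$-orbits I would obtain softly: for $x\in(0,1)$ with $x\neq 1/2$ the compositions $f_1\circ f_1\circ f_0^{\,n}(x)$ accumulate at $0$, and since $f_1=g_4$ is locally eventually onto (uniformly expanding, two full branches), a standard argument exploiting this expansion upgrades accumulation at $0$ to density of the orbit. The remaining assertions I would run through a first-return construction. Induce the skew product $F(\omega,x)=(\sigma\omega,f_{\omega_0}(x))$ on $\Sigma\times Y$ with $Y$ a compact set bounded away from $\{0,1,1/2\}$, chosen so that induced orbits do not pass uncontrollably through the critical point of $f_1$; then the first-return map $\bar F$ is uniformly expanding in $x$ with bounded distortion, so by classical results it carries an invariant probability measure $\widehat m$ whose projection to the $x$-coordinate has a density bounded above and below on $Y$, in particular equivalent to Lebesgue there. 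The estimate of the first paragraph gives the return time $R$ a polynomial tail, and under the standing hypothesis $R\notin L^{1}(\widehat m)$; hence the pulled-out measure $\mu=\sum_{j\ge 0}F_\ast^{\,j}\big(\widehat m\big|_{\{R>j\}}\big)$ is an absolutely continuous, conservative, $\sigma$-finite, \emph{infinite} stationary measure assigning no mass to $\{0,1\}$---the third assertion. (Matching the transfer operator directly near $0$ and near $1$, one sees the density of $\mu$ behaves like $x^{-1}$ up to logarithmic corrections, consistent with its non-integrability.)

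For the two final statistics fix $\varepsilon\in(0,1)$. Assertion (i): if an orbit were eventually contained in $(0,\varepsilon)$ with $\varepsilon<\tfrac12$, then $\log f^{n}_\omega(x)$ would grow by at least $\log\!\big(2(1-\varepsilon)\big)>0$ at each step yet remain below $\log\varepsilon$, which is impossible; for $\tfrac12\le\varepsilon<1$ one adds a conditional Borel--Cantelli argument on the i.i.d.\ symbols, noting that $f_0$-blocks long enough to push the orbit arbitrarily close to $1/2$ recur, so that the ensuing step---necessarily $f_1$---leaves $(0,\varepsilon)$. Assertion (ii) is the heart of the matter. For any $x$ not eventually mapped to $0$ (the complement of this set being only countable, hence Lebesgue-null) and $\nu$-a.e.\ $\omega$, decompose the orbit into its successive sojourns in $(0,\varepsilon)$ and in $[\varepsilon,1]$; by the first paragraph and the induced description, the $(0,\varepsilon)$-sojourn lengths $L_{n}$ have infinite mean while the $[\varepsilon,1]$-sojourn lengths have finite mean. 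Writing $M_{N}$ for the number of excursions completed by time $N$, we have $N\ge\sum_{n\le M_{N}}L_{n}$ while $\frac1M\sum_{n\le M}L_{n}\to\infty$ almost surely (infinite mean), forcing $M_{N}=o(N)$; hence the total time spent in $[\varepsilon,1]$ before time $N$ is $O(M_{N})=o(N)$ and the time spent in $(0,\varepsilon)$ is $N-o(N)$, which is (ii).

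Uniqueness of the stationary probability measure follows from (ii). Write a stationary probability measure as $m=c\,\delta_0+(1-c)m'$ with $m'(\{0\})=0$; if $c<1$ then $m'$ is again stationary, and iterating stationarity (evaluated on the points of the backward orbit of $0$ in turn) shows $m'$ gives zero mass to the countable set of points eventually mapped to $0$, hence also to $\{1,1/2\}$. So (ii) holds at $m'$-a.e.\ $x$, and the Birkhoff ergodic theorem for $F$ on $(\Sigma\times[0,1],\nu\times m')$ gives $m'\big((0,\varepsilon)\big)=1$ for every $\varepsilon>0$; since $m'(\{0\})=0$ this is impossible, so $c=1$ and $m=\delta_0$. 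The principal obstacle throughout is the sharp tail estimate for $R$---equivalently, that the absolutely continuous $\sigma$-finite measure is infinite in precisely the stated parameter range: this demands careful distortion control interpolating between the quadratic critical behaviour at $1/2$ and the linear expansion at $0$ and $1$, together with a faithful passage from the geometric law of block lengths to the law of the excursion depths. A secondary difficulty is verifying uniform expansion and the requisite ergodic properties of the induced map despite the critical point of $f_1=g_4$ at $1/2$, which is what forces the careful choice of the inducing domain $Y$.
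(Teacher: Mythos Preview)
The paper does not prove this theorem at all; it is quoted from \cite{agh18} and stated without argument. The natural comparison is with the paper's proof of the analogous Theorem~\ref{t:inter} for the Riemann-sphere system, which treats the same phenomenon.

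Your excursion decomposition for item~(2)---laminar sojourns near $0$ of infinite mean against relaminarisation phases of uniformly bounded mean, forcing the ratio to $1$---is exactly the mechanism used in the proof of Theorem~\ref{t:inter} (see the computation culminating in \eqref{e:frequency}). The substantive methodological difference is in the uniqueness of the stationary probability measure. The paper argues directly via Kac's lemma: an ergodic stationary probability measure charging $\wC\setminus\{0,-1,\infty\}$ would give a set $A$ of positive measure with finite expected first return, while the superattracting--repelling estimate forces $\int_A R\,d(\nu\times m)=\infty$. You instead deduce uniqueness \emph{a posteriori} from item~(2), using invariance of $\nu\times m'$ and dominated convergence to get $m'((0,\varepsilon))=1$ for all $\varepsilon$. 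Both work; the Kac route is shorter and does not require knowing that item~(2) holds for $m'$-a.e.\ $x$, which is a genuinely stronger statement than the Lebesgue-a.e.\ claim in the theorem. You cover this by asserting that the exceptional set for (2) is only the countable backward $G$-orbit of $0$, which is correct but needs its own justification. Your construction of the $\sigma$-finite measure by inducing on a uniformly expanding return map is the approach of \cite{agh18}; the present paper does not reproduce it.

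One discrepancy worth flagging: your tail heuristic ($f_0$-blocks of geometric length $k$ producing sojourns $\asymp 2^k$, non-summable exactly when $2p_0>1$) yields the condition $p_0>1/2$, which is the hypothesis appearing in the surrounding text and in Theorem~\ref{t:inter}. The printed hypothesis $p_1>1/2$ in the statement you were given is a typo; under it your tail would in fact be summable.
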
	

The theorem expresses the occurrence of intermittent time series; orbits spend most time near $0$ but there are infrequent but repeated bursts away from $0$.
Under the conditions of the theorem,
the only stationary probability measure is $\delta_0$, even though
$0$ is repelling for both maps $f_0,f_1$.
The explanation lies in the existence of a superattracting fixed point $1/2$ for
$f_0$, which will be mapped onto the repelling fixed point $0$ after iterating under $f_1$. Iterates of $f_0$
bring  a point superexponentially close to $1/2$, after iterating $f_1$
the point will get superexponentially close to $0$, after which
many iterates are needed to escape neighborhoods of $0$.

The bound $p_1 \ge 1/2$ is optimal: it is shown in \cite{hometal21} that for $p_1 < 1/2$ there does exist an absolutely continuous
stationary probability measure.

\medskip

Other examples of this phenomenon, which we call critical intermittency,
are possible \cite{hometal21}. In section 2 we will introduce conditions on pairs of $1$-parameter families of rational functions that guarantee critical intermittency. Two key assumptions are the existence of a joint fixed point, which is attracting for the first map, and repelling for the second. In order for critical intermittency to occur, we assume the common fixed point is repelling on average. The first map will moreover have a superattracting fixed point, which is mapped back to the common fixed point by the second map. The proof of intermittency is aided by proving the density of semigroup orbits, which is shown for parameter values for which the common fixed point is not resonant.

In section 3 we introduce an explicit pair of $1$-parameter rational maps of degree $2$  that satisfy the conditions that imply critical intermittency for non-resonant parameter values. We moreover show that density of orbits still frequently occurs for parameter values where the common fixed point is resonant. In section 4 we will treat another explicit example, where the common fixed point is on average neutral instead of repelling. We show that intermittency can still occur, just as for real interval maps.

\section{Iterated function systems on the Riemann sphere}

In this section we will cover key assumptions that cause critical intermittency for rational iterated function systems on the Riemann sphere. We will work with a pair of $1$-parameter families of rational functions, and show that critical intermittency holds for almost every parameter value. In later sections we will discuss explicit pairs of rational functions where it is possible to decide more precise when critical intermittency holds.

\medskip

\noindent {\bf Key assumptions.} Throughout this section let $f_0 = f_{0,\lambda}$ and $f_1 = f_{1,\lambda}$ be rational functions, both depending analytically on a parameter $\lambda \in \mathbb D$. We will stipulate the behavior of $f_0$ and $f_1$ on two marked points in $\widehat{\mathbb C}$. Without loss of generality we may assume that these two points are $0$ and $\infty$.

We assume that $0$ is a fixed point for each $f_0$ and $f_1$. We assume it to be repelling for each $f_0$
and attracting for each $f_1$. The point $\infty$ is assumed to be a superattracting fixed point of $f_0$, and mapped to the fixed point $0$ by $f_1$. We will write $d\ge 2$ for the local degree of $f_0$ at infinity, so that $f_0$ is locally conjugate to $z \mapsto z^d$. Finally, we assume that each function $f_0$ is hyperbolic, and that all the other attracting fixed or periodic points of $f_0$ are mapped to 
the immediate basin of
$\infty$ for $f_0$ by some iterate $f_1^k$. Note that by hyperbolicity, the attracting periodic points vary holomorphically with $\lambda$, hence the iterate $k$ only depends on the choice of attracting periodic point, not on the value of $\lambda$.

For each fixed $\lambda$ we write $\langle f_0, f_1\rangle$ for the iterated function system generated by the functions $f_0$ and $f_1$. We iterate by picking the maps at random, i.i.d., with probabilities $p_0,p_1$ for $f_0,f_1$. We write $\mu(\lambda) = f_0' (0)$ and $\nu(\lambda) = f_1'(0)$, where $|\mu(\lambda)| > 1$ and $|\nu(\lambda)| < 1$ for each $\lambda$.
 The Lyapunov exponent at $0$ is the average
$p_0 \ln |f_0' (0)| + p_1 \ln |f_1'(0)|$ and we assume this to be positive:
\begin{align}\label{e:lyappos_general}
p_0 \ln |\mu(\lambda)| +  p_1 \ln |\nu(\lambda)| &> 0.
\end{align}
This makes the common fixed point $0$ repelling on average.

\subsection{Dense semigroup orbits.}

Recall that % the iterated function system given by 
the action of the semi-group $G = G_\lambda = \langle f_0, f_1\rangle$ is said to be \emph{topologically transitive} if for every non-empty and open $U, V \subset \wC$ there exists $g \in G$ with $g(U) \cap V \neq \emptyset$.

Given an invariant set $S = f_0(S) \cup f_1(S)$,
the action of $G$ is said to be \emph{minimal} on $S$ if for all $z \in S$ the $G$-orbit of $z$ is dense in $S$.
We are interested in $G$-orbits that are dense in $\wC$. As this can not hold for all points, it can for instance not hold for  $z = 0,\infty$, the best that we can hope for is
that $G$-orbits of all but finitely many points
are dense.
%We will prove that for $\lambda \in B(0,1) = \{\lambda \in \mathbb C \; : \; |\lambda| < 1\}$, the  $G$-orbit of a point $z \in \wC \setminus \{-1, 0, \infty\}$ is dense in $\wC$ if and only if $\lambda \notin \mathbb R$.

We start with a result on topological transitivity.

\begin{lemma} \label{lemma:ttgeneral}
For each $\lambda \in \mathbb D$ the action of the semi-group $G$ is topologically transitive.
\end{lemma}

\begin{proof}
Consider an open set $U \subset \widehat{\mathbb C}$. By hyperbolicity of $f_0$ the set $U$ must intersect the attracting basin of some attracting fixed point or periodic cycle. Let $V \subset U$ be an open connected subset contained in this basin. Then for some large $n$ the set $f_0^n(V)$ is contained in a small neighborhood of an attracting periodic point $x$ of $f_0$. Let $k$ be such that $f_1^k (x)$ is in a small neighborhood of  $\infty$.
%$f_1^k(x) = \infty$. 
Then $f_1^k\circ f_0^n (V)$ is contained in a small neighborhood of the point $\infty$.

Recall that near $\infty$ the map $f_0$ is holomorphically conjugate to a map $z \mapsto z^d$, for $d\ge 2$. It follows that for large $\ell$ the set $f_0^\ell \circ f_1^k\circ f_0^n (U)$ contains an annulus around the point $\infty$. Moreover, by increasing $\ell$ if necessary we can guarantee that the modulus of this annulus is arbitrarily large.

It follows that  $f_1 \circ f_0^\ell \circ f_1^k\circ f_0^n (U)$ contains a small annulus of arbitrarily large modulus around the point $0$. Since the repelling fixed point $0$ is a non-isolated point of the Julia set of $f_0$, and since $f_0$ acts in local coordinates as multiplication by the multiplier at the fixed point, it follows that a small annulus around $0$ of sufficiently large modulus must contain a point on the Julia set of $f_0$, and thus also an open neighborhood of such a point. It follows that
$$
\bigcup_{m \in \mathbb N} f_0^m \circ f_1 \circ f_0^\ell \circ f_1^k\circ f_0^n (U) \supset \widehat{\mathbb C} \setminus \mathcal{E}_{f_0},
$$
where $\mathcal{E}_{f_0}$ is the exceptional set of $f_0$, which contains at most $2$ points. This completes the proof.
\end{proof}

\begin{remark}
One can verify that the exceptional set of $f_0$ is in fact empty. By compactness of $\widehat{\mathbb C}$ it follows that there exist integers $n,k,\ell$, and $m$ such that
$$
f_0^m \circ f_1 \circ f_0^\ell \circ f_1^k\circ f_0^n (U) = \widehat{\mathbb C}.
$$
\end{remark}

% \begin{lemma}\label{lemma:densegeneral}
% Suppose that $G_\lambda$ is not persistently resonant, and let $\lambda$ be such that $S = \mathbb C$. Then there exists an $r>0$ such that for all $z \in B_r(0)\setminus\{0\}$ the $G_\lambda$-orbit of $z$ is dense in $\widehat{\mathbb C}$.
% \end{lemma}

\begin{theorem}\label{t:densegeneral}
% Suppose that $\nu(\lambda)\in B(0,1)$ for each $\lambda\in\mathbb D$. 
Let $\lambda$ be such that the set
$$
S = S_\lambda := \overline{\{\mu(\lambda)^n \cdot \nu(\lambda)^m \; : \; n,m \in \mathbb N\}}
$$
coincides with $\mathbb C$. Then there exists an $r>0$ such that for all $z \in B_r(0)\setminus\{0\}$ the $G_\lambda$-orbit of $z$ is dense in $\widehat{\mathbb C}$.
\end{theorem}	

\begin{remark}
For $\nu(\lambda)\in B(0,1)$ one of the following must be satisfied:
\begin{enumerate}
\item $S = \mathbb C$. \label{i:1}
\item $S$ is a finite union of real lines passing through the origin. \label{i:2}
\item $S$ is a discrete union of concentric circles. \label{i:3}
\item $S$ is discrete. \label{i:4}
\end{enumerate}
It is possible that case \eqref{i:2}, \eqref{i:3}, or \eqref{i:4} occurs persistently for all $\lambda$, in which case we say that $G$ is \emph{persistently resonant}. If $G_\lambda$ is not persistently resonant, the equality $S = \mathbb C$ will hold for almost every $\lambda \in \mathbb D$. Indeed, the parameters $\lambda$ for which cases  \eqref{i:2}, \eqref{i:3}, and \eqref{i:4} hold are given by countably many real analytic equations in $\lambda$. Each such equation either holds throughout, in which case $G_\lambda$ is persistently resonant, or is satisfied in a real analytic subvariety of real dimension $1$.
% We will assume that $G_\lambda$ is not persistently resonant.
\end{remark}

\begin{proof}[Proof of Theorem \ref{t:densegeneral}]
We will consider semi-group orbits that remain in $B_r(0)$, and accumulate on a small annulus around the origin of arbitrarily large modulus. The argument that concludes the proof of \ref{lemma:ttgeneral} can then be used to determine density of the orbit in all of $\widehat{\mathbb C}$.

Since we will remain in $B_r(0)$, we may assume that $r>0$ is sufficiently small so that we can use linearizing coordinates for the map $f_0$, i.e. $f_0 (z) = \mu(\lambda) z$. Since the multiplier is preserved under conjugation we obtain $f_1(z) = \nu(\lambda) z + O(z^2)$. Let us denote the local linearization map of this function by $\varphi = \varphi_\lambda$, which is unique once we demand that $\varphi(z) = z + O(z^2)$. Thus $\varphi \circ f_1 \circ \varphi^{-1}: z \mapsto \nu(\lambda) z$ on $B_r(0)$, by shrinking $r$ if necessary.

Consider diverging sequences $(m_j)$ and $(n_j)$ for which $\mu(\lambda)^{m_j}\cdot \nu(\lambda)^{n_j}$ converges to $w \in B_r(0) \setminus \{0\}$. Then
$$
f_0^{m_j} \circ f_1^{n_j} = \mu^{m_j}  \varphi^{-1} (\nu^{n_j}  \varphi (z_0)) \rightarrow w \cdot \varphi (z_0).
$$
It follows that the semi-group orbit of $z$ accumulates on a small annulus around $0$ of arbitrarily large modulus, which completes the proof.
\end{proof}

\begin{remark}\label{r:densegeneral} Theorem \ref{t:densegeneral} implies the density of the $G$-orbit $G(z_0)$ of $z_0$ for an arbitrary initial value $z_0$ whenever some element in $G(z_0)$ lies in a small punctured neighborhood of the origin, which is of course a necessary condition. It is also clear that this condition is not always satisfied. For example, if $z$ equals one of the attracting periodic points of $f_0$ and is mapped exactly onto $\infty$ by $f_1$, then the orbit $G(z)$ is finite. In general there could be different subsets in the Julia set of $f_0$ that are invariant under both $f_0$ and $f_1$, which we exclude in the Lemma below. We will later discuss explicit examples of the functions $f_0$ and $f_1$ for which we can deduce that there are no non-trivial invariant subsets, and as a result we obtain density for all but finitely many initial values $z_0\in \widehat{\mathbb C}$. In those examples we can moreover deduce that density also occurs for most resonant parameters $\lambda$, where $S$ satisfies case (ii), (iii), or (iv). In the resonant setting more precise knowledge of the higher order terms is required to deduce density of local orbits near the origin.
\end{remark}

Theorem~\ref{t:densegeneral} and Remark~\ref{r:densegeneral} yield the following result.

\begin{lemma}\label{lemma:denseverygeneral}
	Suppose that the hypotheses of Theorem~\ref{t:densegeneral} are satisfied. Moreover, assume that none of the attracting fixed or periodic points of $f_0$ are mapped exactly to $\infty$ under iteration by $f_1$. In addition, assume that for any $z_0 \in \wC \setminus \{0,\infty\}$ there is $g \in G_\lambda$ so that $g(z_0)$ lies in the immediate basin of attraction  of $\infty$ for $f_0$. Then for $z \in \wC\setminus \{0,\infty\}$ the $G_\lambda$-orbit of $z$ is dense in $\widehat{\mathbb C}$.
\end{lemma}
% \begin{lemma}\label{lemma:denseverygeneral}
% 	Suppose that $G_\lambda$ is not persistently resonant, and let $\lambda$ be such that $S = \mathbb C$.  
	
% 	Then under the above assumptions for all $z \in \wC\setminus \{0,\infty\}$ the $G_\lambda$-orbit of $z$ is dense in $\widehat{\mathbb C}$.
% \end{lemma}

\subsection{Critical Intermittency}

% We write
%  $\langle f_0,f_1\rangle$ for the iterated function system generated
%  by $f_0$ and $f_1$.  We iterate by picking the maps at random, i.i.d., with probabilities $p_0,p_1$ for $f_0,f_1$.
% The Lyapunov exponent at $0$ is the average
% $p_0 \ln (f_0' (0)) + p_1 \ln (f_1'(0)) =
% p_0 \ln (\nu(\lambda)) +  p_1 \ln (\mu(\lambda))$ and we assume this to be positive:
% \begin{align}\label{e:lyappos_general}
% p_0 \ln (\nu(\lambda)) +  p_1 \ln (\mu(\lambda)) &> 0.
% \end{align}
% This makes the common fixed point $0$ repelling on average.

As in the real setting we write $\Sigma = \{0,1\}^\mathbb{N}$ and endow $\Sigma$ with the product topology and the Borel $\sigma$-algebra. We write $\omega = (\omega_i)_{i\in\mathbb{N}}$ for elements of $\Sigma$.
The iterated function system $\langle f_0, f_1\rangle$ defines a skew product system $F : \Sigma\times \wC \to
 \Sigma\times \wC$ given by
 \[
 F(\omega,z) = (\sigma \omega , f_{\omega_0} (z)).
 \]
Here $\sigma$ is the left shift operator.
Denote
\[
F^n (\omega,z) = (\sigma^n \omega, f^n_\omega (z)).
\]
We write $[i_0\ldots i_k]$ for the cylinder
$\{ \omega \in \Sigma\; : \; \omega_0=i_0,\ldots,\omega_k=i_k\}$.
On $\Sigma$ we consider the Bernoulli measure $\nu_{p_0,p_1}$ for the probabilities $p_0,p_1$, which is invariant under $\sigma$.
A stationary measure $m$ for the iterated function system
defines an invariant measure $\nu_{p_0,p_1}\times m$ for $F$.

\begin{theorem}\label{t:inter_general}
	Consider the iterated function system $\langle f_0,f_1\rangle$
	given with probabilities $p_0,p_1$. Suppose that the key assumptions and the additional hypotheses of Lemma~\ref{lemma:denseverygeneral} hold.
	Assume % \eqref{e:lyappos} holds and that
$$
p_0 > \frac{1}{d}.
$$
	Then the delta measure $\delta_0$ is the only finite stationary measure.
	Moreover, for any $\varepsilon >0$, and for Lebesgue almost any $z \in \wC$,
	\begin{enumerate}
		\item
		$f^n_\omega (z) \not \in B(0,\varepsilon)$ for infinitely many $n$;
		\item
		$
		\lim_{N\to \infty} \frac{1}{N} \left\vert \{ 0 \le n < N \; : \; f^n_\omega (z) \in B(0,\varepsilon) \} \right\vert = 1,
		$
	\end{enumerate}
	for almost all $\omega \in \Sigma$.
\end{theorem}

In the proof we use Ka\u{c} theorem.
We recall a version we use. % See for instance \cite{vo16} for details.
Consider a measurable map $f: X \to X$  with a finite invariant measure $\mu$.
Let $E \subset X$ with $\mu(E) >0$.
Define the first return time $R : E \to \mathbb{N}\cup \{\infty\}$  by
\[
R(x) = \min \{ i >0 \; : \; f^i(x) \in E \}.
\]
We use the statement that the average first return time to $E$ is finite.

\begin{theorem}[Ka\u{c} theorem, see \cite{vo16}]
Let $f: X \to X$ be a measurable map with a finite invariant measure $\mu$. Let $E \subset X$ with $\mu(E) >0$.

Then
\[
\int_E  R(x) \, d\mu(x)  < \infty.
\]
If $\mu$ is an ergodic invariant measure, then $\int_E  R(x) \, d\mu(x)  = \mu (X)/\mu(E)$.
\end{theorem}

\begin{proof}[Proof of Theorem~\ref{t:inter_general}]
Assume there is a finite stationary measure $m$ that assigns
zero measure to $\{0,\infty\}$.
% positive measure to open 
% a neighborhood of $0$ and for which $\{0\}$ itself has zero measure.
%
% positive measure to $\wC \setminus \{ 0,-1,\infty\}$.
By Lemma~\ref{lemma:denseverygeneral} the support of $m$ is all of $\wC$.
Then $\nu_{p_0,p_1}\times m$ is a finite invariant measure for $F$.
(We may assume that $\nu_{p_0,p_1}\times m$ is ergodic.)
Given a set $A\subset \Sigma\times \wC$ of positive measure
$\nu_{p_0,p_1}\times m (A) >0$, Ka\u{c} theorem
yields finite average return time.
We will derive a contradiction
by providing a set $A$ of positive measure and with infinite average return time.

For $A$ we take the product set $A = [0] \times \mathcal{A}$
where $\mathcal{A}$ is an annulus around $0$ between
a small circle $C(0,\delta)$ around $0$ and $f_0\big(C(0,\delta)\big)$. Take $A$ so that it includes
$[0]\times C(0,\delta)$ but excludes $[0]\times f_0\big(C(0,\delta)\big)$. Since the support of $m$ is all of $\wC$ and $\nu_{p_0,p_1}\times m$ is a product measure, we find $\nu_{p_0,p_1}\times m(A)>0$.
%It is easily seen that $m(A)>0$.
Then Ka\u{c} theorem yields
\[
\int_A R(\omega,z)  \, d\nu_{p_0,p_1}\times m(\omega,z) <\infty
\]	
for the first return time $R$ to $A$.

Since $\infty$ is a superattracting fixed point for $f_0$ and $f_1(\infty) = 0$, it follows that for every $z\in \wC$ with $|z|\ge R$ we have $f_1\circ f_0^N(z)\in B(0,\delta)$ for large enough $R$ and $N$ larger than some $N_0$.

A calculation in the spirit of \cite{c02,agh18} establishes infinite average return time to $\mathcal{A}$ for $p_0 >1/d$. Recall that the local degree of $f_0$ at infinity is $d$. Thus for given $z\in\mathbb{C}$ with $|z|\ge R$ we have $|f_1\circ f_0^N(z)|\le  c/|z|^{d^N}$ for some $c>0$. Each iterate maps a point in $B(0,\delta)$ at most a constant factor further away from $0$. Therefore for a given $\omega\in\Sigma$, 
$f^M_\omega\circ f_1\circ f_0^N(z)$ may enter $\mathcal{A}$ only if $M>Cd^N$ for some $C>0$ and
\[
	\int_A R  \, d\nu_{p_0,p_1}\times m  \ge \sum_{i\ge N_0} \int_{[0^i1]\times\mathcal{A}} R  \, d\nu_{p_0,p_1}\times m \ge C \sum_{n \ge N_0} p_0^n d^n
\]
for some $C>0$. Hence $\int_A R  \, d\nu_{p_0,p_1}\times m=\infty$ if $p_0>1/d$.

Consequently the only stationary probability  measure is the delta measure $\delta_0$.	
Item~(1) follows from \eqref{e:lyappos_general}, %\eqref{e:lyappos}, 
which implies that for $z \in B(0,\varepsilon)$, with probability one the orbit $f^n_\omega(z)$ leaves $B(0,\varepsilon)$.	
	
We continue with item~(2).	We follow reasoning from \cite{as03} which is also used in \cite{agh18}.
Instead of using $B(0,\varepsilon)$ we find it convenient to replace it with the union $W$ of $B(0,\varepsilon)$ and a small disc $\{ z \in \mathbb C \; : \; |z|>r \} \cup \{\infty\}$ in $\wC$ around $\infty$.
We will establish that for almost all $\omega$,  $f^n_\omega (z)$ spends on average a bounded number of iterates between leaving and re-entering $W$.
Item~(2) in the formulation of the theorem will be deduced from this, and
it gives in fact additional information on the duration of the relaminarization.

% Let $R>0$ be large enough so that $\{|z|\ge R\}$ is contained in the basin of infinity of $f_0$.
% Since $f_0$ hyperbolic and all attracting fixed or periodic points of $f_0$ other than the point at infinity are mapped close to infinity by some iterate of $f_1$, we can cover $\mathcal{A}$ with finitely many compact sets $P_i$ so that for each $i$ there exists $g_i\in G$ such that $|g_i(z)|\ge R$ for $z\in C_i$. At least one of the $P_i$'s has positive measure $m(P_i) >0$.

Given $\varepsilon>0$, there is a finite partition
$\{ Q_i\}$
of $\wC \setminus W$ so that for $Q_i$
there is a cylinder $B_i$ of uniformly bounded depth $K_i \le  K$,
so that $f_\omega^{K_i}(z) \in W$ for
$(\omega,z)\in B_i\times Q_i$.
As $\nu_{p_0,p_1}(B_i)$ is bounded from below, it follows that the expected first hitting time for
$z \in \wC \setminus W$ to enter $W$
is finite: if $U(\omega,z) = \min\{i >0 \; : \; f_\omega^i(z) \in W \}$, then
$U(\omega,z) < \infty$ almost surely and
\[
\int_{\Sigma} U(\omega,z) \, d \nu_{p_0,p_1} < \infty
\]
uniformly in $z$.

Take an orbit $z_n = f^n_\omega (z_0)$ with for definiteness $z_0 \in W$. Also assume that $z_0$ is not in the inverse orbit of $0$.
Define subsequent escape times from $W$ and $\wC \setminus W$: $T_0 = 0$ and
\begin{align*}
T_{2k+1} &= \inf \{ n \in \mathbb{N} \;  \mid  \; n > T_{2k} \text{ and } z_n \not \in  W \},
\\
T_{2k} &= \inf \{ n \in \mathbb{N} \; \mid \; n > T_{2k-1} \text{ and } z_n \in W \}.
\end{align*}
Note that such a sequence of escape times exists almost surely.
% (As seen above, $U(\omega,z) < \infty$ almost surely. By \eqref{e:lyappos}, a point in $W$ escapes $W$ almost surely.)
Write $\eta_k = T_{2k-1} - T_{2k-2}$ and $\xi_k = T_{2k} - T_{2k-1}$
for the duration of the orbit pieces in $W$ and $\wC\setminus W$.
Define for $n \in [T_{2k},T_{2k+1})$,
\begin{align*}
N_\eta (n) = k, &\;
% \\
N_\xi (n) = k
\end{align*}
and
$\tilde{\eta} (n) =  n +1 - T_{2k}$, $\tilde{\xi} (n) = 0$,
so that $\tilde{\eta}$ counts the number of iterates from $T_{2k}$ on where $z_n \in W$.
Likewise define for $n \in [T_{2k+1},T_{2k+2})$,
\begin{align*}
N_\eta (n) = k+1, &\;
%\\
N_\xi (n) = k
\end{align*}
and
$\tilde{\eta}(n) = 0$,
$\tilde{\xi} (n) =  n +1 - T_{2k+1}$,
so that $\tilde{\xi}$ counts the number of iterates from $T_{2k+1}$ on where $z_n \not\in W$.

Finally calculate
\begin{align}
\nonumber
& \frac{1}{n} \sum_{i=0}^{n-1}   \mathbbm{1}_{W} (f^i_\omega (x_0)) =
\frac{1}{n} \left( \sum_{k=1}^{N_\eta (n-1)} \eta_k + \tilde{\eta} (n-1) \right)
\\
\nonumber
&= \left( \sum_{k=1}^{N_\eta(n-1)} \eta_k + \tilde{\eta}(n-1) \right) \left/ \left( \sum_{k=1}^{N_\eta(n-1)} \eta_k + \tilde{\eta}(n-1) + \sum_{k=1}^{N_\xi (n-1)} \xi_k + \tilde{\xi} (n-1) \right) \right.
\\
\nonumber
&= \left(  1 + \left( \sum_{k=1}^{N_\xi(n-1)} \xi_k + \tilde{\xi} (n-1)   \right) \left/ \left( \sum_{k=1}^{N_\eta(n-1)} \eta_k + \tilde{\eta}(n-1) \right)\right.    \right)^{-1}
\\
\label{e:frequency}
&\ge \left( 1 + \left( \sum_{k=1}^{N_\xi (n-1) + 1} \xi_k \right) \left/ \left( \sum_{k=1}^{N_\eta(n-1)} \eta_k \right)\right.\right)^{-1}.
\end{align}

Construct independent stochastic variables
$\sigma_k\ge \xi_k$ that have uniformly bounded expectation and variance.
This can be done as follows:
by shrinking $B_i$ one constructs cylinders $B_i$ of constant depth $K$ and
with constant measure $\nu_{p_0,p_1}(B_i)$, still satisfying $f_\omega^K (Q_i) \subset W$ for $\omega\in B_i$.
Define $G: \Sigma \times \wC \to \Sigma \times \wC$ by
$G \equiv F$ on $\Sigma \times W$ and $G (\omega,z) = (\sigma \omega , 0)$ for $z\in W$. Take a cylinder $B$ in $\Sigma$ of depth $K$ and measure
$\nu_{p_0,p_1} (B) = \nu_{p_0,p_1}(B_i)$ and add $(B,W)$ to the collection $\{(B_i,Q_i)\}$.
Consider the stochastic variable $\sigma$ that gives the first time to enter a $B_i\times Q_i$ by iterating $G^K$.
Take independent copies $\sigma_k \ge \xi_k$ of $\sigma$.

An application of the strong law of large numbers (see for instance \cite{ks07})  gives that
$\frac 1 m \sum_{k=1}^{m} \xi_k \le  \frac 1 m \sum_{k=1}^m \sigma_k$ stays bounded for large $m$, almost surely.
For $z \in W$, let $V(\omega,z) = \min\{i >0\; : \; F^i (\omega,z) \in \wC \setminus W \}$.
Let $\rho$  be the minimal escape time to  $\wC \setminus W$, minimized
over initial points $z \in W$; $\rho =\min_{z\in W} V(\omega,z)$.
There are independent copies $\rho_n$ of $\rho$ with $\rho_n \le  \eta_n$.
Now $\lim_{m\to\infty} \frac 1 m \sum_{i=0}^{m-1} \rho_i = \infty$
almost surely and hence
$\lim_{m\to\infty} \frac 1 m \sum_{i=0}^{m-1} \eta_i = \infty$
almost surely.
% Likewise $\lim_{m\to\infty} \frac 1 m \sum_{i=0}^{m-1} \eta_i = \infty$
% almost surely.
We conclude that the last term in \eqref{e:frequency} goes to $1$ for almost all $\omega$, as $n \to \infty$ (note that $N_\eta (n-1) - N_\xi(n-1) \le  1$). Item~(2)
follows since the average escape time out of $\{ z \in \mathbb C \; : \; |z|>r\} \cup \{ \infty\}$ is finite.
\end{proof}

\begin{remark}
	We get that for any $z\in \wC$,
	\[
	\lim_{n\to\infty} \frac{1}{n} \sum_{i=0}^{n-1} \delta_{f^i_{\omega} (z)} = \delta_0,
	\]
	where the convergence is in the weak star topology, for $\nu_{p_0,p_1}$ almost all $\omega$.
\end{remark}

\section{Degree $2$ example}
	
In this section we discuss an explicit pair of $1$-parameter rational functions for which critical intermittency can be shown. Given a parameter $\lambda \in \mathbb C$, consider the two maps
\begin{align}\nonumber
f_0 (z) &= 2z + z^2,  %(z+1)^d -1
\\
f_1 (z) &= \lambda \frac{z}{(z+1)^2} \label{e:f0f1}
\end{align}
on the Riemann sphere $\wC$.
The first map, $f_0$, is conjugate to $z \mapsto z^2$ through a translation by $1$. Its Julia set equals the circle $\{ |z+1| =1 \}$.	
The maps $f_0$ and $f_1$ have $0$ as common fixed point.	
Moreover, the set of three points $\{ 0,\infty, -1 \}$
is forward invariant by both maps. The points are mapped under
$f_0$ , $f_1$ in the following way:
\begin{align*}
f_0 (0) &= 0, & f_1(0) &= 0,
\\
f_0 (\infty) &= \infty, & f_1(\infty) &= 0,
\\
f_0 (-1) &= -1, & f_1(-1) &= \infty.
\end{align*}

As in the general setting we write
 $\langle f_0,f_1\rangle$ for the iterated function system generated
 by $f_0,f_1$, and work with associated probabilities $p_0, p_1$. Again we assume that the Lyapunov exponent at $0$ is positive:
\begin{align}\label{e:lyappos}
 p_0 \ln |2| +  p_1 \ln |\lambda| &> 0.
\end{align}
We will use the skew product notation
\[
F(\omega,z) = (\sigma \omega , f_{\omega_0} (z))
\]
introduced in the previous section.

\subsection{Dense semi-group orbits}	

Since our explicit semigroup satisfies the general assumptions from the previous section, we immediately obtain topological transitivity from Lemma \ref{lemma:ttgeneral}. Our next goal is to prove the density of orbits. Lemma \ref{lemma:denseverygeneral} implies that density occurs for almost all values of $\lambda$, and for initial values sufficiently close to $0$. We will see that in our explicit setting density also occurs frequently for parameter values for which the semigroup $G_\lambda$ is resonant.

Of course, for $\lambda \in \mathbb R$ the invariance of $\mathbb R$ implies that density does not occur for real initial values. We will focus on proving density when $\lambda \in B(0,1) \setminus \mathbb R$. We need a lemma on linearizing coordinates.

\begin{lemma}\label{l:linear}
	Let $\lambda \neq 0$. Then the rational functions $f_0$ and $f_1$ are not simultaneously linearizable at $0$.
\end{lemma}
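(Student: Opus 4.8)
The plan is to reduce the statement to the fact that $f_0$ and $f_1$ do not commute near $0$, and then to verify non-commutativity by a short Taylor computation. Suppose, for a contradiction, that there is a germ of biholomorphism $\phi$ with $\phi(0)=0$ linearizing both maps, that is, $\phi\circ f_0\circ\phi^{-1}(z)=\mu_0 z$ and $\phi\circ f_1\circ\phi^{-1}(z)=\mu_1 z$ as germs at $0$. Differentiating at $0$ forces $\mu_0=f_0'(0)=2$ and $\mu_1=f_1'(0)=\lambda$. The crucial point is that \emph{one and the same} coordinate $\phi$ is used for both maps: the linear germs $z\mapsto 2z$ and $z\mapsto\lambda z$ commute, so conjugating back by $\phi$ gives $f_0\circ f_1=f_1\circ f_0$ as holomorphic germs at $0$ (equivalently, since both sides are rational, as self-maps of $\wC$). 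This is where the hypothesis bites: each of $f_0$ and $f_1$ is, generically, individually linearizable at $0$, but a priori in different coordinates.

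It remains to show that $f_0$ and $f_1$ do not commute for any $\lambda\neq 0$. I would check this by expanding both compositions at $0$. Using the identity $1+f_0(z)=(1+z)^2$ one has $f_1\circ f_0(z)=\lambda\bigl((1+z)^2-1\bigr)(1+z)^{-4}$, while $f_0\circ f_1(z)=2f_1(z)+f_1(z)^2$ with $f_1(z)=\lambda z(1+z)^{-2}$. A routine expansion gives
\[
f_0\circ f_1(z)=2\lambda z+(\lambda^2-4\lambda)z^2+(6\lambda-4\lambda^2)z^3+O(z^4)
\]
and
\[
f_1\circ f_0(z)=2\lambda z-7\lambda z^2+16\lambda z^3+O(z^4).
\]
Equating the coefficients of $z^2$ forces $\lambda^2+3\lambda=0$, hence $\lambda\in\{0,-3\}$; equating the coefficients of $z^3$ forces $2\lambda^2+5\lambda=0$, hence $\lambda\in\{0,-5/2\}$. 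These have no common solution with $\lambda\neq 0$, so $f_0\circ f_1\neq f_1\circ f_0$, contradicting the previous paragraph. Therefore no simultaneous linearization exists.

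The argument is short, and the only genuinely conceptual step is the passage from simultaneous linearizability to commutativity. The computation is routine; the one wrinkle I expect is that the quadratic coefficient by itself does not close the proof (it leaves open $\lambda=-3$), so one must carry the expansion to third order, where the surviving candidate $\lambda=-5/2$ is incompatible with $\lambda=-3$. A variant would instead use the uniqueness up to scaling of the Koenigs linearizer of $f_0$ at its multiplier-$2$ fixed point to force $\phi(z)=c\log(1+z)$, and then check that this fails to linearize $f_1$ unless $\lambda=0$; but the commutator route above is cleaner and uniform in $\lambda$.
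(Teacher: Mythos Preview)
Your proof is correct, and it follows a genuinely different route from the paper's. The paper exploits the uniqueness (up to scaling) of the Koenigs linearizer for $f_0$: it computes the normalized linearizer $\varphi(z)=z-\tfrac12 z^2+\tfrac16 z^3+O(z^4)$ of $f_0$ explicitly and then checks directly that $\varphi\circ f_1-\lambda\varphi$ has a nonvanishing term of order $\le 3$ for every $\lambda\neq 0$ (the quadratic term vanishes precisely at $\lambda=-3$, where the cubic survives). Your argument instead extracts the obstruction \emph{commutativity}: a common linearizing chart forces $f_0\circ f_1=f_1\circ f_0$, which you refute by comparing Taylor coefficients of the two compositions. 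Both proofs inevitably need third order to dispose of $\lambda=-3$. Your approach has the virtue of never computing the linearizer at all and of being manifestly independent of whether $f_1$ is individually linearizable; the paper's approach, on the other hand, yields the specific Koenigs chart for $f_0$---indeed your closing remark that it is $\log(1+z)$ is a sharper identification than the paper makes---which is reused later in the proof of Theorem~\ref{t:minimal}.
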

\begin{proof}
	Recall that the linearization map is unique up to a multiplicative constant. It is therefore sufficient to consider the linearization map $\varphi$ for $f_0$ of the form $\varphi(z) = z + a_2 z^2 + a_3 z^3 + O(z^4)$, and to show that $\varphi$ does not also linearize $f_1$. Since $f_0(z) = 2z + z^2$ we have that $a_2 = -1/2$ and $a_3 = 1/6$. Observe that
	$$
	f_1(z) = \frac{\lambda z}{1 + 2z + z^2} = \lambda z - 2 \lambda z^2 + 3\lambda z^3 + O(z^3).
	$$
	It follows that
	$$
	\varphi \circ f_1 - \lambda \varphi = \frac{1}{6}\lambda z^2\left(-3(3+\lambda) + (17 + 12 \lambda + \lambda^2)z + O(z^2)\right).
	$$
	Therefore the second order part of $\varphi \circ f_1 - \lambda \varphi$ only vanishes when $\lambda = -3$, in which case
	$$
	\varphi \circ f_1 - \lambda \varphi = 5z^3 + O(z^4).
	$$
	Hence regardless of the value of $\lambda \neq 0$ the maps $f_0$ and $f_1$ cannot be simultaneously linearizable.
\end{proof}

Next we exclude nontrivial forward invariant sets for the semi-group outside $\{-1,0,\infty\}$.

\begin{lemma}\label{l:noinv}
	Let $\lambda = \mathbb C \setminus \mathbb R$. If the semi-group orbit of $z \in \wC$ is contained in $\{|z+1| = 1\} \cup \{-1, \infty\}$, then $z \in \{0, -1, \infty\}$.
\end{lemma}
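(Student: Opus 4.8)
The plan is to reduce to a point on the circle $C=\{|z+1|=1\}$ (the Julia set of $f_0$, on which $f_0$ acts as angle doubling), to show its orbit closure is then a finite set, and finally to eliminate such finite invariant sets by two explicit computations. So suppose $z\notin\{0,-1,\infty\}$ has $G$-orbit contained in $S:=C\cup\{-1,\infty\}$. Then $f_0(z)\in S$, and since $|f_0(z)+1|=|z+1|^2$ while $f_0(z)\in\{-1,\infty\}$ would force $z\in\{-1,\infty\}$, we must have $z\in C$, hence $z\in C\setminus\{0\}$. Let $K$ be the union of $\{z\}$ with the closure of the $G$-orbit of $z$; then $K$ is compact, $f_0(K)\cup f_1(K)\subseteq K$, and $K\subseteq S$.

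Next I would show $K$ is finite. Parametrizing $C$ by $z=e^{i\theta}-1$, one has $f_1(e^{i\theta}-1)=\lambda e^{-i\theta}(1-e^{-i\theta})$, so $\{w\in C:f_1(w)\in C\}$ is the solution set of $|\lambda e^{-i\theta}(1-e^{-i\theta})+1|^2=1$. This trigonometric equation is nontrivial for every $\lambda\neq0,1$ (it fails near $\theta=0$ when $\lambda\notin\mathbb R$, and at $\theta=\pi$ when $\lambda\in\mathbb R\setminus\{0,1\}$), so it has finitely many solutions; combined with the at most two solutions of $f_1(w)=-1$, this shows $\{w\in C:f_1(w)\in S\}$ is finite. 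Since $f_1(K\cap C)\subseteq K\subseteq S$, the set $K\cap C$ is finite, hence so is $K$ (its remaining points lie in $\{-1,\infty\}$).

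Now $K\cap C$ is a finite, $f_0$-forward invariant subset of $C$ containing $z\neq0$, so the forward $f_0$-orbit of $z$ accumulates on a periodic cycle $P\subseteq K\cap C$; I claim $P=\{0\}$. If not, write $P=\{p_0,\dots,p_{k-1}\}$ with $k\geq2$ and $p_j=e^{2\pi i a_j}-1$, $a_{j+1}\equiv2a_j$. A direct computation gives $f_1(p_j)=\lambda\bigl(e^{-2\pi i a_j}-e^{-2\pi i a_{j+1}}\bigr)$, so $\sum_j f_1(p_j)=0$ by telescoping around the cycle. Since $0\notin P$, no $f_1(p_j)$ is $\infty$, so each lies in $C\cup\{-1\}$; writing those that lie in $C$ as $e^{i\beta}-1$, the relation $\sum_j f_1(p_j)=0$ becomes $\sum_{\text{those in }C}e^{i\beta}=k$. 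A sum of at most $k$ unit vectors can equal $k$ only if there are exactly $k$ of them, each equal to $1$, forcing $f_1(p_j)=0$ hence $p_j=0$ for all $j$ — a contradiction. Therefore $P=\{0\}$, the $f_0$-orbit of $z$ reaches $0$, and the last step into $0$ comes from the unique nonzero $f_0$-preimage of $0$, which is $-2$; hence $-2\in K$.

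It remains to use $f_1(-2)=-2\lambda\in K\subseteq S$. This is not $\infty$, and equals $-1$ exactly when $\lambda=1/2$ — a value one must also exclude, since for $\lambda=1/2$ the point $-2$ has $G$-orbit $\{-2,0,-1,\infty\}\subseteq S$, so the statement needs $\lambda\notin\{0,1/2,1\}$ (as holds in particular when $\lambda\notin\mathbb R$). Otherwise $-2\lambda\in C$, hence $-2\lambda\in K\cap C$ and $f_1(-2\lambda)\in K\subseteq S$; but writing $1-2\lambda=e^{i\alpha}$ (with $\alpha\not\equiv0,\pi$, since $\lambda\neq0,1$), one computes $f_1(-2\lambda)=2\sin^2(\alpha/2)\,e^{-i\alpha}$, and with $t:=2\sin^2(\alpha/2)\in(0,2)$ this gives $|t e^{-i\alpha}+1|^2=1+t(2-t)>1$, so $f_1(-2\lambda)\notin C$ and obviously $f_1(-2\lambda)\notin\{-1,\infty\}$ — the final contradiction. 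The step I expect to be the real obstacle is the analysis of the finite invariant set $K\cap C$: the telescoping identity $\sum_{p\in P}f_1(p)=0$ is the crucial observation, disposing of all nontrivial periodic behaviour at once, and after that only the explicit computation around $-2$ is needed; by comparison the reduction to finiteness is routine, resting only on $f_1(C)\neq C$.
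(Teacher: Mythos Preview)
Your proof is correct and takes a genuinely different route from the paper's. The paper works in $w=z+1$ coordinates, argues that for fixed $\lambda$ there are at most three points $w\in\partial\mathbb D\setminus\{1\}$ with $f_1(w)\in\partial\mathbb D$, enumerates the eleven possible $f_0$-forward orbits in $\partial\mathbb D$ containing at most three points (plus possibly $1$), and then invokes a case-by-case check that none of these is $f_1$-invariant. Your argument instead establishes finiteness softly (real-analyticity of the constraint on $\theta$) and then dispatches \emph{all} nontrivial $f_0$-cycles in one stroke via the telescoping identity $\sum_{p\in P} f_1(p)=\lambda\sum_j\bigl(e^{-2\pi i a_j}-e^{-2\pi i a_{j+1}}\bigr)=0$, reducing the problem to the single preperiodic point $-2$ and an explicit computation there. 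This is cleaner and avoids the enumeration entirely.

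Your approach also brings to light a genuine issue with the statement: for $\lambda=\tfrac12$ the point $z=-2$ has $G$-orbit $\{-2,0,-1,\infty\}\subset S$, so the lemma as written (for $\lambda\in\mathbb C\setminus\{0,1\}$) is false. The paper's sketch, which only tracks when $f_1(w)$ lands back on $\partial\mathbb D$ and not when it lands on $0$ (i.e.\ on $-1$ in $z$-coordinates), does not flag this exception. As you note, this is harmless for the paper's applications, which only use the lemma for $\lambda\in B(0,1)\setminus\mathbb R$; but the correct hypothesis should exclude $\lambda=\tfrac12$ as well.
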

\begin{proof}
	For simplicity of notation we work with $w = z+1$, which gives $f_0: w \mapsto w^2$ and
	$$
	f_1:w \mapsto \frac{\lambda(w-1)}{w^2} +1.
	$$
	%Note that since $\lambda \neq 0$ points on the unit circle cannot be mapped by $f_1$ to $w=0$.
	
	Assume that both $w$ and $f_1(w)$ are contained in $\partial \mathbb D$.
	Analysis of the image of $\partial \mathbb D$ under $w \mapsto \frac{w-1}{w^2}$ shows that for each fixed $\lambda$ there are at most $4$ points $w \in \partial \mathbb D$ whose image can lie in $\partial \mathbb D$, including the fixed point $w =1$.
	\begin{figure}[!ht]
		\begin{center}
			\includegraphics[height=4cm]{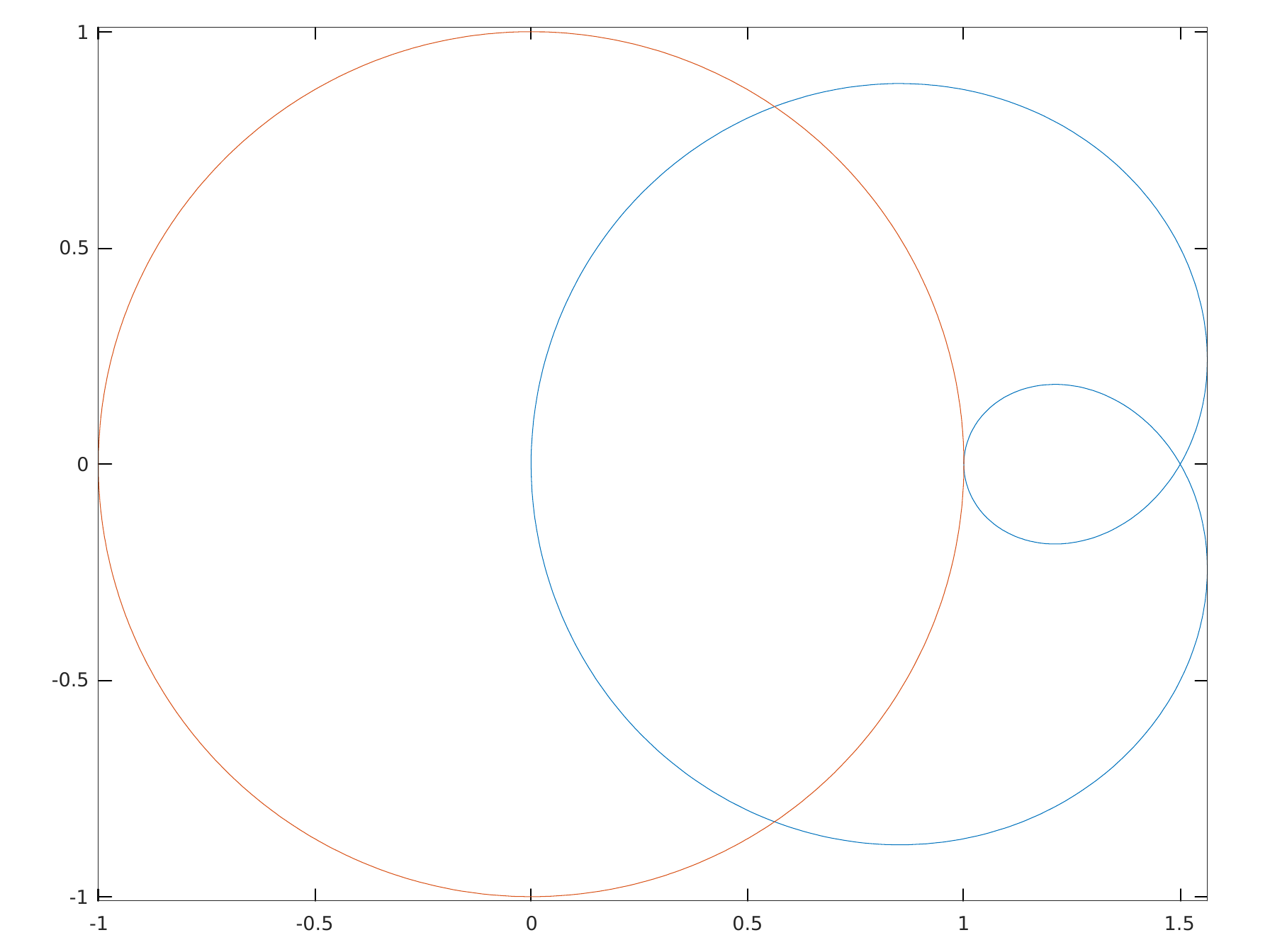}
			\includegraphics[height=4cm]{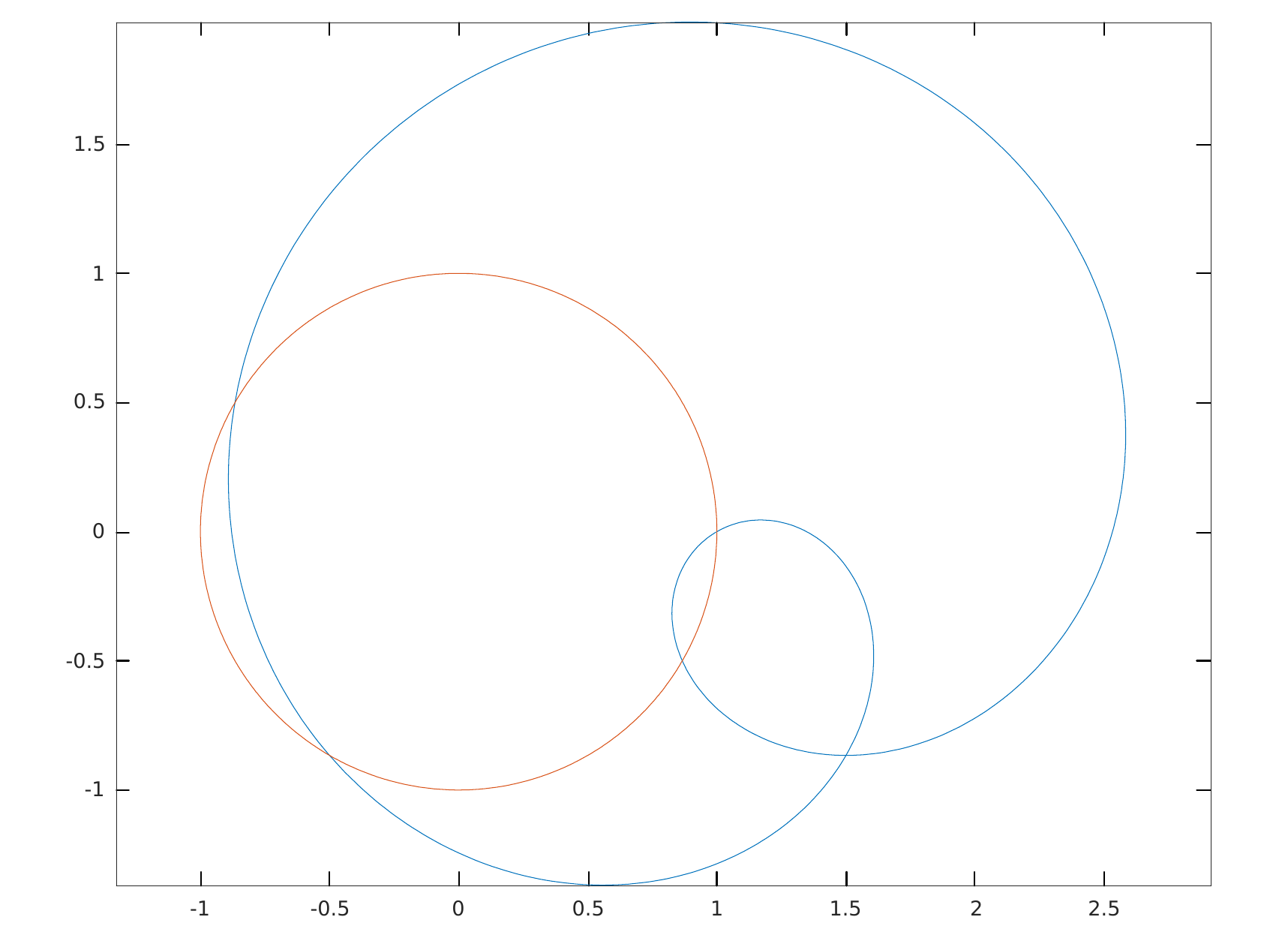}
			\caption{\label{f:curve} Left panel: The curve $f_1(\partial \mathbb D)$ for $\lambda = 1/2$, together with $\partial \mathbb D$.
				Right panel: $f_1(\partial \mathbb D)$ for $\lambda = \frac 1 2 + \frac 1 2 \sqrt{3}$.
			}
		\end{center}	
	\end{figure}
	Therefore it is sufficient to only consider $w \in \partial \mathbb D$ whose forward $f_0$-orbit contains at most $3$ points, possibly with $1$ added. Up to complex conjugation we therefore obtain the following eight candidates for a forward invariant sets.
	% that are, taken a union with $\{1\}$, are forward invariant for $f_0$.
	\begin{align*}
	&\{1, -1\},
	&\{1, i,-1 \},\\
	&\{1, e^{2\pi i/3}, e^{4\pi i/3} \},
	&\{1, -1, -i, i \}\\
	&\{1, e^{\pi i/4} ,i ,-1\}, &\{1, i, -1, e^{5\pi i/4} \}, \\
	&\{1, e^{2\pi i/7}, e^{4\pi/7}, e^{8\pi i/7}   \},
	&\{1, e^{\pi i/3}, e^{2\pi i/3}, e^{4\pi i/3}\}.
	\end{align*}
	A case by case analysis shows that for any of these sets the $f_1$-image does not leave the set invariant for $\lambda \notin \mathbb R$.
\end{proof}

\begin{theorem}\label{t:minimal}
	Let $\lambda \in B(0,1) \setminus \mathbb R$ and let $z_0 \in \mathbb C \setminus \{-1,0,\infty\}$. Then the $G$-orbit of $z_0$ is dense in $\wC$.
\end{theorem}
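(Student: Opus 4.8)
The plan is to show that $D:=\overline{G z_0}$ is all of $\wC$. Note first that $D$ is a nonempty closed forward $G$-invariant set (it contains $z_0$, and $g(\overline{Gz_0})\subset\overline{g(Gz_0)}\subset\overline{Gz_0}$ for every $g\in G$), and that $D\not\subset\{-1,0,\infty\}$ since $z_0\in D$. The crucial easy observation is that \emph{if $D$ has nonempty interior then $D=\wC$}: setting $U=\mathrm{int}(D)$, topological transitivity (established in the proposition above) gives, for every nonempty open $V$, some $g\in G$ with $g(U)\cap V\neq\emptyset$; since $g(U)\subset g(D)\subset D$ this forces $D\cap V\neq\emptyset$, so $D$ is dense and hence equals $\wC$. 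Thus I would argue by contradiction: assume $D$ has empty interior.

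The next step is to locate a point of $D$ off the Julia circle $C_0:=\{|z+1|=1\}$ of $f_0$. By Lemma~\ref{l:noinv}, any forward $G$-invariant set contained in $C_0\cup\{-1,\infty\}$ lies in $\{-1,0,\infty\}$ (apply the lemma to each of its points). Since $D\not\subset\{-1,0,\infty\}$, it must therefore contain a point $q\notin C_0\cup\{-1,\infty\}$; as $0\in C_0$ we also have $q\neq 0$. Hence $q$ lies in one of the two superattracting basins of $f_0$ — the basins of $-1$ and of $\infty$ — but is not its centre. If $q$ lies in the basin of $-1$, then $f_0^n(q)\to -1$, and since $f_1$ has a double pole at $-1$ the points $f_1(f_0^n(q))\in D$ tend to $\infty$ with $|f_1(f_0^n(q))+1|\to\infty$; so after replacing $q$ by such a point I may assume $q$ lies in the basin of $\infty$, i.e. $|q+1|>1$ (and then the whole $f_0$-orbit of $q$ avoids $0$, so all the maps below are defined).

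The heart of the proof is then a local argument near $0$. Iterating $f_0$ drives $q$ to $\infty$, and applying $f_1$ produces a sequence $\zeta_n:=f_1(f_0^n(q))\in D$ with $\zeta_n\to 0$ and $\zeta_n\neq 0$. I would pass to the Koenigs linearizing coordinate of $f_1$ at $0$, which exists because $0<|\lambda|<1$, and in which $f_1$ becomes $u\mapsto\lambda u$ while $f_0$ becomes a holomorphic map tangent to $u\mapsto 2u$ with nonvanishing quadratic term — precisely the change of coordinates used in the proof of the proposition on transitivity. One now studies the orbit of a single $\zeta_n$ under the local semigroup $\langle\, 2u+\cdots,\ \lambda u\,\rangle$: iterating $u\mapsto\lambda u$ spirals $\zeta_n$ into $0$, while interspersed applications of $f_0$ rescale by roughly $2$. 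Because $\lambda\notin\mathbb R$ these spirals genuinely wind around $0$, and the quadratic term of $f_0$ mixes the radial scales; the outcome to be proved is that the closed set $D$, containing the entire orbit, contains a punctured neighbourhood of $0$ and in particular has nonempty interior — contradicting our assumption and thereby, by the first paragraph, giving $D=\wC$.

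The main obstacle is exactly this local spreading estimate: one must rule out that the orbit of $\zeta_n$ near $0$ is confined to a one-real-dimensional set. Non-realness of $\lambda$ immediately excludes confinement to a line or ray, but when $\log|\lambda|/\log 2$ is rational while $\arg(\lambda)/2\pi$ is irrational the orbit of the purely linearized system $\langle 2u,\lambda u\rangle$ sits on a union of concentric circles about $0$; to close the radial gaps one must quantify the cumulative effect of the $z^2$-term of $f_0$ over many iterates at many scales — the same mechanism (growth of $\|v_n\|/|z_n|$ under $f_0$ in the right half-plane) that produced non-normality in the proposition. Making this nonlinear smearing effective uniformly across the relevant range of scales, and then combining the resulting local statement with topological transitivity to fill out the whole sphere, is where the real work lies.
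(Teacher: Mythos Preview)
Your overall architecture matches the paper's: reduce via Lemma~\ref{l:noinv} to a starting point in a small punctured disc about $0$, and observe that by topological transitivity it suffices to show the $G$-orbit is dense in \emph{some} open subset near $0$. (The paper linearizes $f_0$ rather than $f_1$ in most of its cases, but that is cosmetic.)

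The gap is exactly where you locate it, and it is substantial: you have not actually carried out the local density argument. The paper does this via a four-way case split on the closed multiplicative semigroup $S=\overline{\{2^m\lambda^n:m,n\ge 0\}}$: (i) $S=\mathbb C$; (ii) $S$ a finite union of real lines through $0$; (iii) $S$ a union of concentric circles; (iv) $S$ discrete. Case~(i) is immediate from the linear parts alone. Your remark that ``non-realness of $\lambda$ immediately excludes confinement to a line or ray'' dismisses case~(ii) too quickly: when $\arg(\lambda)\in\pi\mathbb Q$ the linear orbit lies on finitely many half-lines, and the paper has to use non-simultaneous-linearizability (Lemma~\ref{l:linear}) to show that the set of half-lines obtained as accumulation points genuinely varies with the base point, sweeping out an open set.

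For the two hard cases the paper uses devices quite different from the $\|v_n\|/|z_n|$ growth you propose to recycle. In the circle case~(iii) one has $2^m\lambda^n=e^{2\pi i\theta}$ with $\theta$ irrational; the paper forms two \emph{distinct} compositions $h_0=f_1^{2m}\circ f_0^{2n}$ and $h_1=(f_1^m\circ f_0^n)^2$ sharing the same neutral multiplier at $0$, invokes an external non-normality result (Guerini--Peters) for such pairs, deduces that $\langle h_0,h_1\rangle$-orbits on each small closed invariant curve around $0$ are unbounded, and rescales by $f_0$ to fill an open set. In the discrete case~(iv) one has $2^m\lambda^n=1$, so $h=g_0^m\circ g_1^n=z+cz^2+\cdots$ with $c\neq 0$ is parabolic; the paper uses its Fatou coordinate to produce, at every small scale, families of real-analytic orbit curves transverse to a fixed direction, and again rescales by $g_0$ to obtain an open set of accumulation points. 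Your intuition that the quadratic term of $f_0$ should smear the circles is in the right spirit, but the paper's actual mechanisms are more structural and do not fall out of the derivative-ratio estimate from the transitivity proposition.
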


\begin{proof}
	We use the notation introduced in the proof of the previous proposition. Since $z_0 \notin \{-1, 0, \infty\}$ Lemma~\ref{l:noinv} implies that its $G$-orbit intersects $B(0,r) \setminus \{0\}$, hence we may assume that $z_0 \in B(0,r) \setminus \{0\}$. By topological transitivity of the $G$-action it is sufficient to prove that the $G$-orbit is dense in some open $U \subset B(0,r)$. We will prove this using only the local orbits $f_\omega^n(z_0)$ with $\omega \in \Omega$
	(recall that $\Omega$ is the set of all sequences $\omega \in \{0,1\}$ for which $f_\omega^n(z_0) \in B(0,r)$).
	
Define
$$
S:= \overline{\{2^m \cdot \lambda^n\}}.
$$
As in the general setting we distinguish four cases:
\begin{enumerate}
		\item[(i)] $S$ is dense in $\mathbb C$.
		\item[(ii)] $S$ is a union of real lines passing through the origin.
		\item[(iii)] $S$ is a union of concentric circles.
		\item[(iv)] $S$ is discrete.
\end{enumerate}
	
    The simplest case~(i) was treated in the general setting, in Lemma \ref{lemma:ttgeneral}. It is clear that in the other cases it is not sufficient to consider the linear part only, and one needs to take into account the higher order terms. An idea that will be used in several different places is the following: if the $G$-orbit of $a$ accumulates on a point $b$, and the $G$-orbit of $b$ accumulates on $c$, then the $G$-orbit of $a$ accumulates on $c$ as well.
	
	\medskip
	
	\noindent {\bf Case~(ii).} Observe that there exists a minimal $k$ such that $\lambda^k >0$. Use linearization coordinates as in case~(i).
 	 %for $f_0$ such that $g_0(z) = 2z$ and $g_1(z) = \lambda z + O(z^2)$. Recall that $\varphi \circ g_1 \circ \varphi^{-1}: z \mapsto \lambda z$ for a linearization map of the form $\varphi(z) = z + O(z^2)$.
 	  Since by Lemma~\ref{l:linear} $f_0$ and $f_1$ are not simultaneously linearizable at $0$  it follows that $\varphi$ is not linear.
	
	Denote by $H$ a union of $k$ radial half-lines, invariant under the linear map $z \mapsto \lambda z$. Then it follows that $g_1$ maps $\varphi^{-1} (H)$ into itself. Observe that $\varphi^{-1}(H)$ consists of $k$ real analytic curves, each tangent at the origin to a half-line.
	It follows that the $g_1$-orbit of $z_0$ is contained in $\varphi^{-1} (H_0)$ for some choice of half-lines $H_0$.
	As in case~(i), $g_0^m \circ g_1^n (z_0)$ converges to $2^m \lambda^n \varphi (z_0)$ as $2^m \lambda^n$ converges along a sequence with $m,n \to \infty$.
	It follows that the set of accumulation points of $g_0^{m} \circ g_1^n(z_0)$ contains a set of half-lines containing $\varphi(z_0)$.
	
	Since this description of the accumulation points holds for any base point $z_0$ in $B(0,r)$, we can apply it also to each of the points $z$ in one of the $k$ radial half-lines in $H_0$. Thus we obtain a union of $k$-half lines $H(z)$ that varies continuously with $z$. It remains to be shown that these $k$-half lines vary with $z$, and is not constant. But this follows since $\varphi(H)$ is \emph{not} a union of half-lines, since $\varphi$ is not linear. It follows that we obtain an open set of accumulation points, which completes the proof.
	
	\medskip
	
	\noindent {\bf Case~(iii).} We have that $2^m\cdot \lambda^n = e^{2\pi i \theta}$ for some $\theta \in \mathbb R \setminus \mathbb Q$. It follows that
	$$
	f_1^m \circ f_0^n(z) = e^{2\pi i \theta} z + h.o.t..
	$$
	Note that the rationality of $f_0$ and $f_1$ implies that the higher order terms are non-zero. Since $f_0$ and $f_1$ are not simultaneously linearizable, it follows that
	$$
	f_0^n \circ f_1^m \neq f_1^m \circ f_0^n,
	$$
	and hence
	$$
	f_1^{2m} \circ f_0^{2n} \neq \left(f_1^m \circ f_0^n\right)^2.
	$$
	Therefore the semi-group $H = \langle h_0, h_1\rangle$ induced by the two distinct neutral maps $h_0 := f_1^{2m} \circ f_0^{2n}$ and $h_1 := \left(f_1^m \circ f_0^n\right)^2$ with the same rotation number, cannot be normal in a neighborhood of $0$, see \cite{gp18}. In particular the action of $H$ on any closed curve winding around $0$ is unbounded.
	
	Observe that $\{f_0^m \circ f_1^n(z_0)\}_{m,n \in \mathbb N}$ accumulates on closed curves winding around $0$ of arbitrarily small radius. Hence there are points on those curves whose $H$-orbits must be unbounded. But since the generators of $H$ are both neutral at the origin, it follows that the unbounded orbit under $H$ starting at such a point must be arbitrarily dense on an annulus enclosed by two of such closed curves winding around $0$. It follows that by considering the $H$-action on smaller and smaller scales, and repeatedly composing with $f_0$ to get back to a given scale, we obtain density on an open set which completes the proof of case~(iii).
	
	\medskip
	
	\noindent {\bf Case~(iv).} Let us again work in linearization coordinates for $f_0$, such that we can write $g_0(z) = 2z$. By the assumption in case~(iv) it follow that $2^m \lambda^n = 1$ for certain minimal $n$ and $m$, from which it follows that
	$$
	h(z) := g_0^m \circ g_1^n(z) = z + h.o.t..
	$$
	Straightforward computation shows that the second order term of $g_1$ does not vanish when $|\lambda| < 1$ (Lemma~\ref{l:linear}),
	from which it follows that the second order term of $h$ also does not vanish. Thus $h$ has a parabolic fixed point at the origin with a single parabolic basin. In order to simplify the discussion we can apply a linear coordinate transformation to give $h(z) = z + z^2 + O(z^3)$, so that all orbits in the parabolic basin of $h$ converge to zero along the negative real axis.
	
	%Without loss of generality we may assume that $\lambda$ lies in the upper half plane.
	Consider base points $z_n = g_1^n(z_0)$ for $n$ large with $\mathrm{arg}(z_n)$ bounded away from $0$. Then $z_n$ lies in the parabolic basin of $h$ at $0$. Write $z_{n,j} = h^j(z_n)$ where $j$ runs from $0$ to a large $k = k(n)$ satisfying $|z_{n,k}| << |z_n|$. Recall that the points $z_{n,j}$ converge to zero as $j \rightarrow \infty$, along a real analytic curve tangent to the negative real axis. Moreover, the ratios between consecutive points satisfy
	$$
	\frac{z_{n,j}}{z_{n,j+1}} \rightarrow 1
	$$
	as $n \rightarrow \infty$. Write $w_{n,j} = g_1(z_{n,j})$, so that the points $w_{n,j}$ converge to zero along the half line through $-\lambda$, which is different from the negative real axis, and since $\lambda \notin \mathbb R$ also different from the positive real axis.  Choose $J>0$ such that $\mathrm{arg} (w_{n,j}) \sim \mathrm{arg}(-\lambda)$ for $j \ge J$. It follows that the points $w_{n,j}$ still lie in the parabolic basin for $j \ge J$. Now define $w_{n,j, \ell} = h^\ell(w_{n,j})$ for $j \ge J$ and $\ell \ge 0$.
	
	Recall the existence of the Fatou coordinate on the parabolic basin: a change of coordinates, again denoted by $\varphi$, conjugating $h$ to $z \mapsto z+1$. Recall that $\varphi$ is of the form $z \mapsto -\frac{1}{z} + b \log(z) + o(1)$ for some $b \in \mathbb C$. It follows that each of the orbits $\{w_{n,j, \ell}\}_{\ell \in \mathbb N}$ lies on a real analytic curve, and these real analytic curves are all transverse to the half line through $-\lambda$, with angles bounded away from zero. After scaling by an iterate $g_0^s$ to bring $w_{j,J,\ell}$ back to fixed scale, we obtain an arbitrarily dense set of points lying in an open set of uniform size. By increasing $n$ and taking a convergent subsequence of $g_0^s (w_{j,J,\ell})$ we obtain a dense set of accumulation points in an open subset, completing the proof.
\end{proof}

\begin{remark}
	It is not clear to the authors whether Theorem \ref{t:minimal} also holds
	%minimality of the semi-group holds robustly
	for nonreal $\lambda$ with $|\lambda| > 1$. However, it does hold for generic $\lambda$.
	Assume that the set
	$$
	S^\prime := \overline{\{2^{-m} \cdot \lambda^n\}}
	$$
	is dense in $\mathbb C$. Using the attraction under $f_0$ to the point $\infty$, we can consider a starting value $z_0$ that is unequal to but arbitrarily close to $0$. We may therefore assume that, for $k \in \mathbb N$ large, the point $2^k z_0$ is still close to zero. For $j \le k$ we obtain that
	$$
	f_0^j f_1^n (z_0) \sim 2^j \lambda^n z_0 = 2^{-(k-j)} \lambda^n (2^m z_0)
	$$
	when $f_0^j f_1^n (z_0)$ is still sufficiently close to the origin. The assumption that $S^\prime$ is dense implies that by starting with smaller and smaller values of $z_0$, the set of points $f_0^j f_1^n(z_0)$ becomes more and more dense in a round annulus centered at $0$ of arbitrarily large modulus. As in the proof of Theorem~\ref{t:densegeneral} it follows that the semi-group orbit is dense in $\wC$.
\end{remark}

\begin{remark}
The Fatou set $F(G)$ of the semi-group $G = \langle f_0,f_1\rangle$ is the set of points where $G$ is normal. The Julia set $J(G)$, defined as the complement $\wC \setminus F(G)$,  is a closed backward invariant set.
Under the assumptions of Theorem~\ref{t:minimal},  $J(G)$ equals
$\wC$.
Indeed, the Julia set contains $0$ and by backward invariance also $\infty$. By \cite{hm96a} it contains a neighborhood of $\infty$ and then using Theorem~\ref{t:minimal} it equals $\wC$.
By \cite{hm96b}, repelling fixed points of elements of  $G$ lie dense in $J(G) = \wC$.
\end{remark}

\subsection{Intermittency}
Our explicit semigroup satisfies the hypotheses of Theorem~\ref{t:densegeneral}. Moreover Lemma~\ref{l:noinv} implies that the only invariant subset is $\{-1,0,\infty\}$. Thus the result of Theorem~\ref{t:inter_general} still holds for our specific example. As the density occurs for parameter values for which the semigroup is resonant, we can state the intermittency with larger values of $\lambda$.
\begin{theorem}\label{t:inter}
	Consider the iterated function system $\langle f_0,f_1\rangle$
	given with probabilities $p_0,p_1$. Let $\lambda \in B(0,1) \setminus \mathbb R$.
	Assume \eqref{e:lyappos} holds and  \[ p_0 > \frac{1}{2}.\]
	
	Then the only finite stationary measure is the delta measure $\delta_0$.
% 	For any $z\in \wC$,
% 	\[
% 	\lim_{n\to\infty} \frac{1}{n} \sum_{i=0}^{n-1}       \delta_{f^i_{\omega} (z)} = \delta_0,
% 	\]
% 	with convergence in the weak star topology, for      $\nu_{p_0,p_1}$ almost all $\omega$.
	For any $\varepsilon >0$, for Lebesgue almost any $z \in \wC$,
	\begin{enumerate}
		\item
		$f^n_\omega (z) \not \in B(0,\varepsilon)$ for infinitely many $n$;
		\item
		$
		\lim_{N\to \infty} \frac{1}{N} \left\vert \{ 0 \le n < N \; : \; f^n_\omega (z) \in B(0,\varepsilon) \} \right\vert = 1,
		$
	\end{enumerate}
	for almost all $\omega \in \Sigma$.
\end{theorem}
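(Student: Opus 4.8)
The idea is to adapt the mechanism behind the interval theorem. The feature of $f_0$ that matters is that in the coordinate $w=z+1$ it is exactly $w\mapsto w^2$, so it has superattracting fixed points at $-1$ and $\infty$; moreover $f_1$ maps a punctured neighbourhood of $-1$ into a neighbourhood of $\infty$, and a punctured neighbourhood of $\infty$ into a neighbourhood of $0$. Thus a long run of $f_0$'s drives a random orbit doubly-exponentially close to $-1$ or $\infty$, after which a few $f_1$'s drive it doubly-exponentially close to the repelling fixed point $0$, whence a long time is needed to escape. First I would set up the local picture at $0$: in linearizing coordinates for $f_0$, so that $f_0(z)=2z$ and $f_1(z)=\lambda z+O(z^2)$, the real sequence $\log|f^n_\omega(z)|$ is, as long as the orbit stays in a fixed ball $B(0,\varepsilon)$ and up to a uniformly bounded error, a random walk with i.i.d.\ increments $\log 2$ (probability $p_0$) and $\log|\lambda|$ (probability $p_1$), with drift the positive number $c:=p_0\log 2+p_1\log|\lambda|$ of \eqref{e:lyappos}. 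By the strong law of large numbers this walk tends to $+\infty$, so no orbit stays in $B(0,\varepsilon)$ forever, and an orbit entering $B(0,\varepsilon)$ at distance $\rho$ from $0$ leaves it after about $c^{-1}\log(1/\rho)$ steps. Granting that a.e.\ orbit re-enters every $B(0,\delta)$ infinitely often, conclusion~(1) follows: after each entry into $B(0,\delta)\subset B(0,\varepsilon)$ the orbit must again leave $B(0,\varepsilon)$, producing infinitely many indices with $f^n_\omega(z)\notin B(0,\varepsilon)$.

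Next I would analyse the behaviour near $-1$ and $\infty$ and extract the divergent sojourn times. In the coordinate $w=z+1$, a run of $j$ applications of $f_0$ sends a point at distance $\rho<1$ from $-1$ to distance $\rho^{2^j}$, and two further applications of $f_1$ send it to distance of order $\rho^{2^{j+1}}$ from $0$ (likewise, via a single $f_1$, near $\infty$). Combined with the escape estimate, whenever the orbit lies in a fixed disk around $-1$ (or $\infty$) and the next $j$ symbols of $\omega$ equal $0$, the ensuing sojourn in $B(0,\varepsilon)$ lasts of order $2^j/c$ steps; as that symbol pattern occurs with probability of order $p_0^{\,j}$, the expected length of the sojourn triggered by one such visit is at least a constant times $\sum_j(2p_0)^j$, which diverges exactly because $p_0>1/2$. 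I would complement this with a uniform-recurrence estimate — combining the escape estimate, topological transitivity, Lemma~\ref{l:noinv} and compactness of $\wC$ — showing that a.e.\ orbit visits those disks with positive frequency, that its excursions outside $B(0,\varepsilon)$ have \emph{finite} expected length, and in particular that a.e.\ orbit re-enters every $B(0,\delta)$ infinitely often. Cutting the time axis into alternating blocks inside and outside $B(0,\varepsilon)$ and applying Birkhoff's ergodic theorem to the first-return map to the outside region, the average length of the outside blocks is finite while that of the inside blocks is infinite, so the fraction of time spent outside $B(0,\varepsilon)$ tends to $0$; the incomplete last block is negligible since inside blocks grow, giving conclusion~(2).

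For the statement on stationary measures, let $m$ be a finite stationary probability measure, so that $\nu\times m$ is $F$-invariant. I would first check that conclusion~(2) holds off a set carrying no stationary measure other than $\delta_0$: by Theorem~\ref{t:minimal} any stationary measure charging a point of $\wC\setminus\{-1,0,\infty\}$ has full support, and a short direct computation with $f_0,f_1$ restricted to $\{-1,0,\infty\}$ shows that the only stationary measure supported there is $\delta_0$. Applying Birkhoff's theorem to the ergodic components of $\nu\times m$ then gives $(\nu\times m)(\Sigma\times B(0,\varepsilon))=1$, hence $m(B(0,\varepsilon))=1$, for all $\varepsilon>0$; letting $\varepsilon\to0$ yields $m=\delta_0$.

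The main obstacle I anticipate is the uniform-recurrence input: away from $0$ the random dynamics is not uniformly expanding — under $f_0$ it is attracted to $-1$ and $\infty$ — so showing that excursions away from $B(0,\varepsilon)$ have finite expected length, that the disks around $-1$ and $\infty$ are visited with positive frequency, and pinning down the exact Lebesgue-null set of bad initial points, requires genuine global arguments rather than a soft hyperbolicity estimate, and is precisely what makes the renewal/ratio-ergodic argument above legitimate.
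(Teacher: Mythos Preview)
Your mechanism is the paper's mechanism: superattraction under $f_0$ pushes orbits doubly-exponentially close to $\infty$ (or $-1$), an application of $f_1$ brings them doubly-exponentially close to $0$, and the random walk estimate at $0$ shows escape takes order $2^j$ steps after a run of $j$ zeros; since $\sum p_0^j 2^j=\infty$ for $p_0>1/2$, average sojourn at $0$ is infinite. The paper, however, organises the argument differently. It proves uniqueness of the stationary measure \emph{first}, by a Kac's-lemma contradiction: assuming an ergodic stationary $m$ charging $\wC\setminus\{-1,0,\infty\}$, it takes an annulus $\mathcal A$ just outside $B(0,\varepsilon)$, uses Theorem~\ref{t:minimal} to get $m(\mathcal A)>0$, and shows directly that the expected return time to $[0]\times\mathcal A$ is infinite. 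Only afterwards does it prove items~(1) and~(2). Your reverse route --- prove~(2) for every $z$ outside a countable set and then deduce $m=\delta_0$ via Birkhoff for $\nu\times m$ --- is viable in principle, but it forces you to establish~(2) for $m$-a.e.\ $z$ without knowing anything about $m$; the paper's Kac argument sidesteps this.

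The genuine gap is in your proof of~(2). You invoke ``Birkhoff's ergodic theorem for the first-return map to the outside region'', but there is no invariant probability measure available for that map: the whole point is that the only stationary probability is $\delta_0$, so Birkhoff has nothing to act on. The block lengths $\xi_k$ (outside) and $\eta_k$ (inside) are not i.i.d.\ either, since each block's starting point depends on the entire past. The paper handles this by manufacturing i.i.d.\ comparison variables: it builds $\sigma_k\ge\xi_k$ i.i.d.\ with finite mean (via a uniform finite hitting-time bound on $\wC\setminus W$, obtained from a finite partition and cylinders of bounded depth), and $\rho_k\le\eta_k$ i.i.d.\ with infinite mean (the minimal escape time from $W$), and then applies the strong law of large numbers to these, not Birkhoff. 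That is the missing ingredient in your sketch; once you replace the Birkhoff step by this SLLN-with-stochastic-domination argument, the rest of your outline goes through essentially as in the paper.
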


	\section{Vanishing Lyapunov exponents}
	
	For iterated function systems of interval maps,
	Gharaei and the first author~\cite{gh17} showed how intermittent time series
	occur if there is a common fixed point with a vanishing Lyapunov exponent,
	so that the common fixed point is neutral on average.
%	See also Abbasi, Gharaei, Homburg \cite{agh18} and  Athreya, Schuh \cite{as03} for iterated function systems of	logistic maps.
We will present an analogous example for iterated function systems
of M\"obius transformations on the Riemann sphere.
Consider the maps
\begin{align*}
f_0 (z) &= \mu z,
\\
f_1 (z) &= \frac{z}{\mu + z}
\end{align*}
We pick the maps $f_0,f_1$ with equal probability.% $1/2$.

\begin{theorem}
	Consider the iterated function system $G = \langle f_0,f_1\rangle$
	given with probabilities $p_0=p_1=1/2$. Assume that $|\mu| >1$ and $\mu \not\in \mathbb R$.
	
	The $G$-orbit of any
	$z_0\in \wC \setminus \{0\}$ is dense.
	
	The only finite stationary measure is the delta measure $\delta_0$.
	%	For any $z\in \wC$,
	%	\[
	%	\lim_{n\to\infty} \frac{1}{n} \sum_{i=0}^{n-1}\delta_{f^i_{\omega} (z)} = \delta_0,
	%	\]
	%	with convergence in the weak star topology, for $\nu_{p_0,p_1}$ almost all $\omega$.
	For any $\varepsilon >0$, for Lebesgue almost any $z \in \wC$,
	\begin{enumerate}
		\item
		$f^n_\omega (x) \not \in B(0,\varepsilon)$ for infinitely many $n$;
		\item
		$
		\lim_{N\to \infty} \frac{1}{N} \left\vert \{ 0 \le n < N \; : \; f^n_\omega (x) \in B(0,\varepsilon) \} \right\vert = 1,
		$
	\end{enumerate}
	for $\nu_{p_0,p_1}$-almost all $\omega \in \Sigma$.
\end{theorem}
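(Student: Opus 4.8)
The plan is to follow the proofs of Theorems~\ref{t:minimal} and~\ref{t:inter}, the new input being a change of coordinates. The involution $w=1/z$ conjugates $f_0$ and $f_1$ to the affine maps $\tilde f_0(w)=w/\mu$ and $\tilde f_1(w)=\mu w+1$; the common fixed point $z=0$ becomes $w=\infty$, the point $z=\infty$ becomes $w=0$, and one computes that $f_1\circ f_0$ is the translation $w\mapsto w+1$ and $f_0\circ f_1$ the translation $w\mapsto w+1/\mu$, so $G$ contains parabolic M\"obius transformations fixing $z=0$. Near $z=0$ the multipliers of $f_0$ and $f_1$ are $\mu$ and $1/\mu$, so with $p_0=p_1=\tfrac12$ the Lyapunov exponent at $0$ vanishes; this is the mechanism behind the intermittency, replacing the superattracting point of Theorem~\ref{t:inter}. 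The parabolic elements above play the role of the parabolic composition in case~(iv) of the proof of Theorem~\ref{t:minimal}.

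\emph{Density.} Fix $z_0\neq0$ and write $h=f_1\circ f_0$, so that in the coordinate $w=1/z$ one has the identity
\[
\frac{1}{f_0^{\,s}\circ h^{\,j}\circ f_1\circ h^{\,\ell}(z_0)} \;=\; \frac{c_0+\mu\ell+j}{\mu^{\,s}}, \qquad c_0:=1+\mu/z_0,
\]
valid in $\wC$ for all integers $\ell,j,s\ge 0$. As $\ell,j$ vary over $\mathbb Z_{\ge0}$ the numerators $c_0+\mu\ell+j$ lie in, and are $R$-dense at large modulus in, the translate by $c_0$ of the closed convex cone spanned by $\mathbb R_{\ge0}$ and $\mathbb R_{\ge0}\mu$; this cone has positive angular width since $\mu\notin\mathbb R$. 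Given $q\in\mathbb C^{*}$, for $s$ large the point $q\mu^{s}$ has large modulus, and its argument $\arg q+s\arg\mu$ lands strictly inside the cone's angular interval for infinitely many $s$ — when $\arg\mu/(2\pi)$ is irrational because that sequence equidistributes, and when it is rational (its denominator being $\ge 3$, as $\mu\notin\mathbb R$) because the interval is not narrower than the spacing of the finitely many attained arguments; this may force us to exclude finitely many directions $q$, which does not affect the conclusion since $\overline{Gz_0}$ is closed. For such $s$ there is an admissible numerator within bounded distance of $q\mu^{s}$, and then $(c_0+\mu\ell+j)/\mu^{s}$ approximates $q$ within $O(|\mu|^{-s})\to0$. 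Hence $\overline{Gz_0}$ contains $\wC\setminus\{0,\infty\}$ in the $w$-plane, so $\overline{Gz_0}=\wC$; this is the first assertion.

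\emph{The unique finite stationary measure --- the main obstacle.} Suppose $m$ is a finite stationary measure with $m(\wC\setminus\{0\})>0$; we may assume $\nu\times m$ ergodic (ergodic decomposition), whence $m$ has full support by the density statement. Take $A=[0]\times\mathcal A$ with $\mathcal A$ the annulus between a small circle $S$ around $0$ and $f_0(S)$, so $m(A)>0$; Kac's lemma gives $\int_A R\,d(\nu\times m)=1/(\nu\times m(A))<\infty$ for the first return time $R$ to $A$. We contradict this. On $[1]\times\mathcal A$ the first iterate is $f_1$, carrying $\mathcal A$ into $B(0,\varepsilon)$ roughly one ``level'' deeper; so it suffices to show that an orbit started there needs infinitely many iterates on average to return to the scale of $\mathcal A$. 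Near $0$ the quantity $\log_{|\mu|}|z_n|$ performs a random walk with increment exactly $-1$ when $\omega_n=0$ (since $f_0$ is linear) and $+1+O(|z_n|)$ when $\omega_n=1$; with $p_0=p_1=\tfrac12$ the conditional mean of the increment is $O(|z_n|)$ and, averaged over $\arg z_n$, vanishes by the mean value property of $\log|1+z/\mu|$ on small circles. Choosing $S$ small makes this walk a uniformly negligible perturbation of the simple symmetric random walk, valid at all depths; for such a walk started one level below the scale of $\mathcal A$, the expected time to return to that scale before descending past level $M$ is $\gtrsim M$ (the identity $E_k[\tau_{\{0,M\}}]=k(M-k)$), and since the description holds for every $M$ the expected return time is infinite. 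Thus $\int_A R\,d(\nu\times m)=\infty$, a contradiction. As $\delta_0$ is stationary and, by the density statement, the support of any stationary measure is $\{0\}$ or $\wC$, $\delta_0$ is the unique finite stationary measure. This random-walk estimate is the crux, and the point where the vanishing Lyapunov exponent is essential: unlike in Theorem~\ref{t:inter}, where the excursions near $0$ were fed from the superattracting point and reached doubly-exponential depth, here the infinite expected sojourn near $0$ comes solely from the null-recurrence (the $t^{-1/2}$ return-time tail) of the critical walk, and the main work is to run the comparison with simple random walk uniformly in the depth.

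\emph{Intermittency.} For~(1): if $f^n_\omega(z)\in B(0,\varepsilon)$ for all large $n$, then $\log_{|\mu|}|f^n_\omega(z)|$ would be an eventually confined trajectory of the (perturbed) symmetric walk above, impossible almost surely since its $\limsup$ is almost surely $+\infty$; hence $f^n_\omega(z)\notin B(0,\varepsilon)$ for infinitely many $n$, for $\nu$-a.e.\ $\omega$ and every $z\neq0$. For~(2): we follow the proof of Theorem~\ref{t:inter}, now with $W=B(0,\varepsilon)$ alone --- no neighborhood of $\infty$ is needed, because $\infty$ is only a geometrically (not super-) attracting fixed point of $f_0$ while $f_1$ sends any point near $\infty$ to a neighborhood of $z=1$, so sojourns near $\infty$ have geometrically distributed, hence finite-mean, length. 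One checks (a) a finite partition $\{Q_i\}$ of $\wC\setminus W$ and cylinders $B_i$ of bounded depth with $F^{K_i}(B_i\times Q_i)\subset W\times\Sigma$ --- by compactness and the density statement --- so that the time to enter $W$ from $\wC\setminus W$ has uniformly bounded expectation and the excursion lengths $\xi_k$ outside $W$ are dominated by i.i.d.\ variables of bounded mean and variance; and (b) each sojourn length $\eta_k$ in $W$ dominates in distribution i.i.d.\ copies of a variable of infinite mean, namely, on the event that the orbit applies $f_1$ upon entering $W$, an escape time from level $\ge1$, which has infinite expectation by the estimate above. The strong law then gives $\tfrac1m\sum_{k\le m}\xi_k$ bounded while $\tfrac1m\sum_{k\le m}\eta_k\to\infty$, so the fraction of time in $W$ tends to $1$, which is~(2), valid for every $z\neq0$ and $\nu$-a.e.\ $\omega$.
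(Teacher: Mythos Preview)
Your argument is essentially correct and, for the stationary-measure and intermittency parts, follows the paper's own strategy: Kac's lemma on an annulus $\mathcal A$ near $0$, reduction to the null recurrence of the (perturbed) symmetric random walk performed by $\log_{|\mu|}|z_n|$, and the excursion-length comparison (i.i.d.\ upper bounds $\sigma_k\ge\xi_k$ outside $W$, i.i.d.\ lower bounds with infinite mean inside $W$). The paper is terser---it simply cites \cite[Theorem~5.2]{gh17} for the walk estimate and refers back to Theorem~\ref{t:inter} for the rest---so your gambler's-ruin computation and your remark that $W=B(0,\varepsilon)$ alone suffices here (because $\infty$ is only geometrically attracting for $f_0$ and $f_1(\infty)=1$) are welcome elaborations rather than departures.

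Where you genuinely differ is the density proof. The paper says only ``proved as in Theorem~\ref{t:minimal}'', which in that setting required distinguishing four cases and appealing to non-simultaneous linearizability. Your route is cleaner for these specific maps: the involution $w=1/z$ conjugates $f_0,f_1$ to \emph{global} affine maps, and the explicit formula $(c_0+\mu\ell+j)/\mu^{s}$ for a cofinal family of group elements reduces density to an elementary statement about lattice points in a sector scaled by powers of $\mu$. This bypasses the case analysis entirely and makes the M\"obius example self-contained.

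Two small slips to fix. First, the signs of the walk increments are reversed: since $f_0(z)=\mu z$ with $|\mu|>1$, applying $f_0$ gives increment $+1$ (exactly), while $f_1$ gives $-1+O(|z_n|)$; this is harmless by symmetry but should be corrected. Second, your set $A$ should be $[1]\times\mathcal A$, not $[0]\times\mathcal A$: you immediately say ``On $[1]\times\mathcal A$ the first iterate is $f_1$, carrying $\mathcal A$ one level deeper'', which is exactly the setup that feeds directly into the escape-time estimate. With $A=[1]\times\mathcal A$ the Kac contradiction is immediate; with $A=[0]\times\mathcal A$ one needs an extra step (pass to a sub-cylinder such as $[0\,1^k]$ to push the orbit inside $S$ before invoking the walk estimate), which you did not write.
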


\begin{proof}
	That semi-group orbits lie dense  is proved as in Theorem~\ref{t:minimal}.
	To prove the remaining statements on intermittency we follow the reasoning of Theorem~\ref{t:inter}.
	Key statement is again
	\begin{equation}\label{eq:again}
	\int_A R  \, d\nu_{p_0,p_1}\times m = \infty,
	\end{equation}
	where
	\[
	R(\omega,z) = \min \{ i >0 \; : \; F^i(\omega,z) \in A \}
	\]
	is the return time to $A$.
	Here as before $A = [0] \times \mathcal{A}$ with
	$\mathcal{A}$ an annulus between a small circle $S$ around $0$ and its image $f_0(S)$.
	By Ka\u{c} theorem this implies there is no finite stationary measure with support intersecting $\mathcal{A}$, so that the only stationary probability measure is $\delta_0$.
	
	To obtain \eqref{eq:again}, fix $z$ inside $S$ and let $H(\omega) = \min \{ i >0 \; : \; F^i(\omega,z) \in A \}$ be the first time that $f^i_\omega(z)$ that enters $A$.
	We are done if we prove
	\begin{align}\label{e:Einfty}
	\int_\Sigma H \, d\nu_{p_0,p_1} &= \infty.
	\end{align}
	A sequence $z_n = |f^n_\omega (z_0)|$ that stays near $0$ satisfies
	$|z_{n+1} - \mu z_n| \le C z_n^2$ (if $\omega_n = 0$)
	or
	$|z_{n+1} - \frac 1 \mu z_n| \le C z_n^2$ (if $\omega_n = 1$)
	for some $C>0$.
	Now  \eqref{e:Einfty} follows just as in the proof of  \cite[Theorem~5.2]{gh17}.
	
	The remaining statements follow as in the proof of Theorem~\ref{t:inter}.
\end{proof}

\def\cprime{$'$}

\end{document}